\newcommand*{\scrF}{\ensuremath{\mathscr{F}}} 
\newcommand*{\caB}{\ensuremath{\mathcal{B}}}	
\newcommand*{\caC}{\ensuremath{\mathcal{C}}}	
\newcommand*{\caF}{\ensuremath{\mathcal{F}}}	
\newcommand*{\caH}{\ensuremath{\mathcal{H}}}	
\newcommand*{\caO}{\ensuremath{\mathcal{O}}}	
\newcommand*{\caS}{\ensuremath{\mathcal{S}}}	
\newcommand*{\N}{\mathbb{N}}									
\newcommand*{\R}{\mathbb{R}}									
\newcommand*{\Rd}{{\mathbb{R}^d}}							
\newcommand*{\eps}{\varepsilon}								
\newcommand*{\E}{\mathbb{E}}									
\renewcommand*{\P}{\mathbb{P}}								
\newcommand*{\ii}{\mathrm{i}}
\newcommand{\Infkt}[1]{1_{ #1 }}							
\numberwithin{equation}{section}
\newtheorem{lemma}{Lemma}[section]
\newtheorem{theorem}[lemma]{Theorem}
\newtheorem{remark}[lemma]{Remark}
\newtheorem{example}[lemma]{Example}
\newtheorem{assumption}[lemma]{Assumptions}
\begin{document}

\begin{titlepage}
\null \vspace{0.5cm}
\begin{center}
{\Large\bf Non elliptic SPDEs and ambit fields:}\\
{\Large\bf existence of densities}
\medskip

by\\
\vspace{7mm}

\begin{tabular}{l@{\hspace{10mm}}l@{\hspace{10mm}}l}
{\sc Marta Sanz-Sol\'e}$\,^{(\ast)}$  &&{\sc Andr\'e S{\"u}\ss}$\,^{(\ast)}$\\
{\small Facultat de Matem\`atiques}          &&{\small Center for Advanced Study}\\
{\small University of Barcelona}  &&{\small Norwegian Academy}\\
{\small }  &&{\small of Sciences and Letters}\\ 
{\small Gran Via de les Corts Catalanes 585 }                                      &&{\small Drammensveien 78}\\
{\small 08007 Barcelona, Spain}        &&{\small 0271 Oslo, Norway}\\              
{\small e-mail: marta.sanz@ub.edu}       &&{\small e-mail: suess.andre@web.de}\\
{\small www.ub.edu/plie/Sanz-Sole}       &&{}
\null         
\end{tabular}
\end{center}

\vspace{1.5cm}

\noindent{\bf Abstract.} Relying on the method developed in \cite{debusscheromito}, we prove the existence of a density for two different examples of random fields indexed by $(t,x)\in(0,T]\times \Rd$. The first example consists of SPDEs with Lipschitz continuous coefficients driven by a Gaussian noise white in time and with a stationary spatial covariance, in the setting of \cite{dalang}. The density exists on the set where the nonlinearity $\sigma$ of the noise does not vanish. This complements the results in \cite{sanzsuess3} where $\sigma$ is assumed to be bounded away from zero. The second example is an ambit field
with a stochastic integral term having as integrator a L\'evy basis of pure-jump, stable-like type. 

\medskip

\noindent{\bf Keywords.} Absolute continuity; stochastic partial differential equations; ambit fields; L\'evy basis.
\medskip

\noindent{\bf AMS Subject Classifications.} Primary 60H15, 60J75, 60E07; Secondary 60H07, 60H05.
\medskip


\vspace{1 cm}

\noindent
\footnotesize
{\begin{itemize} \item[$^{(\ast)}$] Supported by the grant MTM 2012-31192 from the \textit{Direcci\'on General de
Investigaci\'on Cient\'{\i}fica y T\'ecnica, Ministerio de Econom\'{\i}a y Competitividad, Spain.}
\end{itemize}}


\end{titlepage}

\newpage








\section{Introduction}

Malliavin calculus has proved to be a powerful tool for the study of questions concerning the probability laws of random vectors, ranging from its very existence to the study of their properties and applications.
Malliavin's probabilistic proof of H\"ormander's hypoellipticity theorem for differential operators in quadratic form provided the existence of an infinitely differentiable  
density with respect to the Lebesgue measure on $\R^m$ for the law at a fixed time $t>0$ of the solution to a stochastic differential equation (SDE) on $\R^m$ driven by a multi-dimension Brownian motion. The classical Malliavin's criterion for existence and regularity of densities (see, e.g. \cite{malliavin1}) requires strong regularity of the random vector $X$ under consideration. In fact, $X$ should be in the space $\mathbb{D}^\infty$, meaning that it belongs to Sobolev type spaces of any degree. As a consequence, many interesting examples are out
of the scope of the theory, for example, SDE with H\"older continuous coefficients, and others that will be mentioned throughout this introduction.

Recently, there have been several attempts to develop techniques to prove existence of density, under weaker regularity conditions than in the Malliavin's theory, but providing much less information on its properties. The idea is to avoid applying integration by parts, and use instead some approximation procedures.
A pioneer work in this direction is \cite{fournierprintems}, where the random vector $X$ is compared with a {\it good} approximation $X^\eps$ whose law is known. The proposal of the random vector $X^\eps$ is inspired in Euler numerical approximations and
 the comparison is done through their respective Fourier transforms. The method is illustrated with several one-dimensional examples os stochastic equations, all of them having in common that the {\it diffusion coefficient} is H\"older continuous and the drift term, a measurable function: SDEs, including cases of random coefficients, a stochastic heat equation with Neumann boundary conditions, and a SDE driven by a L\'evy process.

With a similar motivation, and relying also on the idea of {\it approximation}, A. Debussche and M. Romito prove in  \cite{debusscheromito} a useful criterion for the existence of density of
random vectors. In comparison with \cite{fournierprintems}, the result is formulated in an abstract form, it applies to multidimensional random vectors and provides additionally information on the
space where the density lives. The precise statement is given in Lemma \ref{lem:existencedensity} below. As an illustration of the method, \cite{debusscheromito} considers finite dimensional functionals of the solutions of the stochastic Navier-Stokes equations in dimension 3, and in \cite{debusschefournier} SDEs driven by stable-like L\'evy processes with H\"older continuous coefficients.
A similar methodology has been applied in  \cite{ballyclement1}, \cite{ballyclement2}, \cite{ballyfournier}. The more recent work \cite{ballycaramelino} applies interpolation arguments on Orlicz spaces to 
obtain absolute continuity results of finite measures. Variants of the criteria provide different types of properties of the density. The results are illustrated by diffusion processes with $\log$-H\"older coefficients and piecewise deterministic Markov processes.

Some of the methods developed in the references mentioned so far are also well-suited to the analysis of stochastic partial differential equations (SPDEs) defined by non-smooth differential operators. Indeed,
consider a class of SPDEs defined by
\begin{equation}\label{eq:SWE3d}
  Lu(t,x) = b(u(t,x)) + \sigma(u(t,x))\dot F(t,x), \ (t,x)\in(0,T]\times \Rd,
\end{equation}
with constant initial conditions, where $L$ denotes a linear differential operator, $\sigma, b: \R \to \R$,  and $F$ is a Gaussian noise, white in time with some spatial correlation (see Section \ref{s2} for
the description of $F$). Under some set of assumptions, \cite[Theorem 2.1]{sanzsuess3} establishes the existence of density for the random field solution of \eqref{eq:SWE3d} at any point $(t,x)\in(0,T]\times \Rd$, and also that the density belongs to some Besov space. The theorem applies for example to  the stochastic wave equation in any spatial dimensions $d\ge 1$.

The purpose of this work is to further illustrate the range of applications of Lemma \ref{lem:existencedensity} with two more examples. The first one is presented in the next Section \ref{s2} and complements the results of \cite{sanzsuess3}. In comparison with this reference, here we are able to remove the {\it strong ellipticity} property on the function $\sigma$, which is crucial in most of the applications of
Malliavin calculus to SPDEs (see \cite{sanzbook}), but the class of operators $L$ is more restrictive Nevertheless, Theorem \ref{t3.1}, applies for example to the stochastic heat equation in any spatial dimension and to the stochastic wave
equation with $d\le 3$. For the latter example, if $\sigma$, $b$ are smooth functions and $\sigma$ is bounded away from zero, existence and regularity of the density of $u(t,x)$ has been established in
\cite{quersanz1} and \cite{quersanz2}.

The second example, developed in Section \ref{s3}, refers to ambit fields driven by a class of L\'evy bases (see \eqref{eq:ambitfield}). Originally introduced in \cite{bns} in the context of modeling turbulence, ambit fields are stochastic processes indexed by time and space that are becoming popular and useful for the applications in mathematical finance among others. The expression \eqref{eq:ambitfield} has some similitudes with the mild formulation of \eqref{eq:SWE3d} (see \eqref{2.1}) and can  be also seen as an infinite dimensional extension of SDEs driven by L\'evy processes. We are not aware of
previous results on densities of random fields.

We end this introduction by quoting the definition of the Besov spaces relevant for this article as well as the existence of density criterion by \cite{debusscheromito}.

The spaces $B_{1,\infty}^s$, $s>0$,  can be defined as follows.
Let $f:\Rd\to\R$. For $x,h\in\Rd$ set $(\Delta^1_hf)(x)=f(x+h)-f(x)$. Then, for any $n\in\N$, $n\geq 2$, let
\[ (\Delta_h^nf)(x) = \big(\Delta^1_h(\Delta^{n-1}_hf)\big)(x) = \sum_{j=0}^n (-1)^{n-j}\binom{n}{j}f(x+jh). \]
For any $0<s<n$, we define the norm
\[ \|f\|_{B^s_{1,\infty}} = \|f\|_{L^1} + \sup_{|h|\leq 1} |h|^{-s}\|\Delta_h^nf\|_{L^1}. \]
It can be proved that for two distinct $n,n'>s$ the norms obtained using $n$ or $n'$ are equivalent. 
Then we define $B^s_{1,\infty}$ to be the set of $L^1$-functions with $\|f\|_{B^s_{1,\infty}}<\infty$.
We refer the reader to \cite{triebel} for more details.

In the following, we denote by $\caC^\alpha_b$ the set of bounded H\"older continuous functions of degree $\alpha$. The next Lemma establishes the criterion on existence of densities that we will apply in our examples.

\begin{lemma}\label{lem:existencedensity}
Let $\kappa$ be a finite nonnegative measure. Assume that there exist $0<\alpha\leq a<1$, $n\in\N$ and a constant $C_n$ such that for all $\phi\in\caC^\alpha_b$, and all $h\in\R$ with $|h|\leq1$,
\begin{equation}
	\bigg|\int_\R \Delta_h^n\phi(y)\kappa(d y)\bigg|\leq C_n\|\phi\|_{\caC^\alpha_b}|h|^a.
	\label{eq:existencedensity}
\end{equation}
Then $\kappa$ has a density with respect to the Lebesgue measure, and this density belongs to the Besov space $B^{a-\alpha}_{1,\infty}(\R)$.
\end{lemma}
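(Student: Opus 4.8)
The plan is to deduce the statement from a characterization of the Besov space $B^{s}_{1,\infty}(\R)$ via the action of finite differences on test functions, applied to the generalized function (distribution) that $\kappa$ defines. First I would regard $\kappa$ as a tempered distribution and aim to show that $\kappa$, as a distribution, lies in $B^{a-\alpha}_{1,\infty}(\R)$; since $a-\alpha \geq 0$ (and in the relevant range $a-\alpha<1$), membership in $B^{a-\alpha}_{1,\infty}$ with a positive—or zero—smoothness index forces $\kappa$ to be given by an $L^1$ function, which is exactly the asserted absolute continuity together with the Besov regularity of the density. So the two conclusions (existence of a density, and its Besov class) will come as a single package from one estimate.

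The key step is a duality argument. Recall the elementary identity for iterated differences: for smooth $f$ and the difference operator, $\int \Delta_h^n \phi(y)\, f(y)\, dy = \int \phi(y)\, \Delta_{-h}^n f(y)\, dy$ (up to a harmless sign/reflection of $h$, which is immaterial since we take $\sup_{|h|\le 1}$). Thus controlling $\big|\int \Delta_h^n\phi\, d\kappa\big|$ for all $\phi$ in a suitable class is, by duality, the same as controlling $\|\Delta_h^n \kappa\|$ in the dual norm. The hypothesis \eqref{eq:existencedensity} says precisely that the linear functional $\phi \mapsto \int \Delta_h^n\phi\, d\kappa$ has norm $\leq C_n |h|^a$ on $\caC^\alpha_b$. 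Since the dual of (a suitable subspace of) $\caC^\alpha_b = B^\alpha_{\infty,\infty}$ is, in the appropriate sense, $B^{-\alpha}_{1,1}$, this bounds $\|\Delta_h^n\kappa\|_{B^{-\alpha}_{1,1}} \lesssim |h|^a$. Now I invoke the difference-based description of Besov spaces: a distribution $g$ with $\sup_{|h|\le1}|h|^{-a}\|\Delta_h^n g\|_{X} < \infty$, where $X = B^{-\alpha}_{1,1}$ has smoothness $-\alpha$, belongs to the Besov space obtained by adding $a$ to the smoothness, i.e. $B^{a-\alpha}_{1,1}$ — and a fortiori to $B^{a-\alpha}_{1,\infty}$, which is what is claimed. (One also needs the crude bound that $\kappa$ itself, being a finite measure, lies in $B^{0}_{1,\infty}\subset B^{-\alpha}_{1,1}$, so the "low-frequency" part is under control; this is where finiteness of $\kappa$ enters.) The condition $n>a$ (in fact $n > a-\alpha$, guaranteed by $n\in\N$ and $a<1$) ensures the order of differencing is high enough to characterize the target smoothness, consistent with the equivalence of norms for different $n$ stated in the excerpt.

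The main obstacle — and the part that needs care rather than cleverness — is making the duality between Hölder spaces $\caC^\alpha_b$ and Besov spaces $B^{-\alpha}_{1,1}$ precise enough to legitimately transfer the estimate to $\kappa$ as a distribution, and then citing the correct form of the "lifting smoothness by finite differences" theorem for negative-smoothness Besov spaces. The subtlety is that $\caC^\alpha_b = B^\alpha_{\infty,\infty}$ is not separable and its dual is larger than $B^{-\alpha}_{1,1}$, so one should either restrict to a dense subclass, or — more cleanly — run the argument by a direct mollification: convolve $\kappa$ with a smooth approximate identity $\rho_\eps$, apply \eqref{eq:existencedensity} to the smooth functions $\phi = \rho_\eps * \psi$ to get uniform-in-$\eps$ bounds on $\|\Delta_h^n(\kappa*\rho_\eps)\|_{L^1}$ of order $|h|^{a-\alpha}$ — using that mollification costs $\eps^{-\alpha}$ in the Hölder norm but $\Delta_h^n$ acting on the $\eps$-scale contributes a compensating $\eps^{\alpha}$ when $|h|\sim\eps$, a standard Littlewood–Paley / frequency-localization bookkeeping — combine with $\|\kappa*\rho_\eps\|_{L^1}\le \kappa(\R)$, pass to the limit $\eps\downarrow 0$ to obtain a weak-$*$ limit in $L^1$ (here uniform $L^1$ bounds plus the differencing estimate give equi-integrability, hence weak compactness in $L^1$), identify the limit with $\kappa$, and read off the $B^{a-\alpha}_{1,\infty}$ norm from Fatou applied to the difference quotients. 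I expect the frequency-scale bookkeeping in the mollification estimate to be the one genuinely technical computation; everything else is soft functional analysis.
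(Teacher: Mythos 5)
First, a point of reference: the paper does not prove this lemma at all --- it is quoted from \cite{debusscheromito} (``we end this introduction by quoting \dots the existence of density criterion by \cite{debusscheromito}''), so there is no internal proof to compare against; the benchmark is the mollification argument in that reference (going back to \cite{fournierprintems}). Your second, mollification-based route is indeed that strategy in outline, but the step you dismiss as ``frequency-scale bookkeeping'' is where the actual content lies, and as written it does not close.

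Concretely: applying \eqref{eq:existencedensity} to $\phi*\rho_\eps$ together with $\|\phi*\rho_\eps\|_{\caC^\alpha_b}\le C\eps^{-\alpha}\|\phi\|_{\infty}$ gives, by $L^1$--$L^\infty$ duality, $\|\Delta_h^n(\kappa*\rho_\eps)\|_{L^1}\le C\eps^{-\alpha}|h|^{a}$, which is $\le C|h|^{a-\alpha}$ \emph{only when} $\eps\ge|h|$. In the regime $\eps\ll|h|$ --- precisely the one you need in order to send $\eps\downarrow0$ at fixed $h$ and to extract a limit --- this bound degenerates and the trivial bound $2^n\kappa(\R)$ has no decay in $h$, so the claimed uniform-in-$\eps$ estimate of order $|h|^{a-\alpha}$ is not justified. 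The missing ingredient, which is the heart of the Debussche--Romito proof, is a second estimate at matched scales: one writes $\rho_{2^{-j-1}}*\psi-\rho_{2^{-j}}*\psi$ as an average of $n$-th differences $\Delta_y^n$ with $|y|\sim2^{-j}$ and applies \eqref{eq:existencedensity} at $h=y$ to get $\|\kappa*\rho_{2^{-j-1}}-\kappa*\rho_{2^{-j}}\|_{L^1}\le C\,2^{-j(a-\alpha)}$; since $a>\alpha$ this telescopes into a convergent geometric series, so the mollifications are \emph{Cauchy in $L^1$} (this, not equi-integrability, is what produces the density), and combining the Cauchy estimate with the hypothesis-based bound at the single scale $\eps\approx|h|$ yields $\|\Delta_h^nf\|_{L^1}\le C|h|^{a-\alpha}$ for the limit $f$. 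Two further inaccuracies: (i) a uniform $L^1$ mass bound plus a differencing estimate valid only for $|h|\lesssim\eps$ do \emph{not} give equi-integrability or weak $L^1$ compactness (mollified Dirac masses satisfy both and converge to $\delta_0$), and $L^1$ is not a dual space, so ``weak-$*$ limit in $L^1$'' is not available; (ii) your parenthetical that a \emph{zero} smoothness index already forces absolute continuity is false, since $B^0_{1,\infty}$ contains point masses --- the argument genuinely requires $a>\alpha$ (as in \cite{debusscheromito}), and that strict inequality is exactly what makes the telescoping series converge. Your first, purely dual route (testing against Littlewood--Paley blocks and then recovering $\|\Delta_j\kappa\|_{L^1}\le C2^{-j(a-\alpha)}$ by averaging $\Delta_h^n$ over $|h|\sim2^{-j}$) can be made rigorous and is arguably cleaner, but it is the same matched-scale idea in disguise and again only closes for $a>\alpha$.
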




\section{Nonelliptic diffusion coefficients}
\label{s2}
In this section we deal with SPDEs without the classical ellipticity assumption on the coefficient $\sigma$, i.e.\ $\inf_{x\in\Rd} |\sigma(x)|\geq c>0$. In the different context of SDEs driven by a L\'evy process, this situation was considered in \cite[Theorem 1.1]{debusschefournier}, assuming in addition that $\sigma$ is bounded. Here, we will deal with SPDEs in the setting of \cite{dalang} with not necessarily bounded coefficients $\sigma$. Therefore, the results will apply in particular to Anderson's type SPDEs ($\sigma(x)= \lambda x$, $\lambda \ne 0$).

 We  consider the class of SPDEs defined by \eqref{eq:SWE3d},
with constant initial conditions, where $L$ denotes a linear differential operator, and $\sigma, b: \R \to \R$. In the definition above, $F$ is a Gaussian noise, white in time with some spatial correlation. 

Consider the space of Schwartz functions on $\Rd$, denoted by $\caS(\Rd)$, endowed with the following inner product
\[ \langle \phi,\psi\rangle_{\caH} := \int_\Rd dy \int_\Rd \Gamma(dx) \phi(y)\psi(y-x), \]
where $\Gamma$ is a nonnegative and nonnegative definite tempered measure. Using the Fourier transform we can rewrite this inner product as
\[ \langle \phi,\psi\rangle_{\caH} = \int_\Rd  \mu(d\xi) \caF\phi(\xi)\overline{\caF\psi(\xi)}, \]
where $\mu$ is a nonnegative definite tempered measure with $\caF\mu = \Gamma$. Let 
$\caH := \overline{(\caS,\langle\cdot,\cdot\rangle_\caH)}^{\langle\cdot,\cdot\rangle_\caH}$,
and $\caH_T := L^2([0,T];\caH)$. It can be proved that $F$ is an isonormal Wiener process on $\caH_T$.

Let $\Lambda$ denote the fundamental solution to $Lu=0$ and assume that  $L$ is either a function or a non-negative measure of the form $\Lambda(t,dy)dt$ such that $\sup_{t\in[0,T]}\Lambda(t,\Rd)\le C_T<\infty$. We consider
\begin{align}
\label{2.1}
  u(t,x) &= \int_0^t\int_\Rd \Lambda(t-s,x-y)u(s,y)M(ds,dy) \notag\\
  & + \int_0^t\int_\Rd \Lambda(t-s,x-y)b(u(s,y))dydy,
\end{align}
as the integral formulation of \eqref{eq:SWE3d}, where $M$ is the martingale measure generated by $F$. In order for the stochastic integral in the previous equation to be well-defined, we need to assume that
\begin{equation*}
\int_0^T ds \int_\Rd \mu(d\xi) |\caF\Lambda(s)(\xi)|^2 < +\infty.
\end{equation*}
 According to \cite[Theorem 13]{dalang} (see also \cite{walsh}), equation \eqref{2.1} has a unique random field solution $\{u(t,x);, (t,x)\in[0,T]\times \Rd\}$ which has a spatially stationary law (this is a consequence of the \emph{S}-property in \cite{dalang}), and for all $p\geq2$
\[ \sup_{(t,x)\in[0,T]\times\Rd} \E\big[|u(t,x)|^p\big] < \infty. \]

We will prove the following result on the existence of a density.

\begin{theorem}
\label{t3.1}
The hypotheses are as in \cite[Theorem 13]{dalang}. Moreover, we assume that
\begin{align*} 
ct^\gamma \leq&\int_0^t ds \int_\Rd\mu(d\xi) |\caF\Lambda(s)(\xi)|^2 \leq Ct^{\gamma_1},\\
&\int_0^t ds |\caF\Lambda(s)(0)|^2 \leq Ct^{\gamma_2}, 
\end{align*}
 for some $\gamma, \gamma_1, \gamma_2>0$ and positive constants $c$ and $C$. Suppose also that there exists $\delta>0$ such that
  \begin{equation}
  \label{eq:eps^delta}
		\E\big[|u(t,0)-u(s,0)|^2\big] \leq C|t-s|^\delta,
  \end{equation}
  for any $s,t\in[0,T]$ and some constant $C>0$, and that 
   \begin{equation*}
   \bar\gamma := \frac{\min\{\gamma_1,\gamma_2\} + \delta}{\gamma} > 1. 
   \end{equation*}
   Fix $(t,x)\in(0,T]\times \R^d$. Then, the probability law of $u(t,x)$ has a density $f$ on the set $\{y\in\R;\sigma(y)\ne 0\}$. In addition, there exists $n\ge 1$ such that the function 
  $y\mapsto \vert \sigma(y)\vert^n f(y)$ belongs to the Besov space $B_{1,\infty}^\beta$, with $\beta\in(0,\bar\gamma-1)$.
  
 \begin{proof}
 We will apply Lemma \ref{lem:existencedensity}. Since the law of $u(t,x)$ is stationary in space, we can take $x=0$. Consider the measure 
 \begin{equation*}
 \kappa(dy) = |\sigma(y)|^n\left(P\circ u(t,0)^{-1}\right)(dy). 
 \end{equation*}
 We define the following approximation of $u(t,0)$. Let for $0<\eps<t$
	\begin{equation}\label{eq:addueps} 
		u^\eps(t,0) = U^\eps(t,0) + \sigma(u(t-\eps,0))\int_{t-\eps}^t\int_\Rd \Lambda(t-s,-y)M(d s,d y),
	\end{equation}
where
\begin{align*}
	U^\eps(t,0) = & \int_0^{t-\eps}\int_\Rd \Lambda(t-s,-y)\sigma(u(s,y))M(ds,dy)\\
			&	+ \int_0^{t-\eps}\int_\Rd \Lambda(t-s,-y)b(u(s,y))dyds \\
			& + b(u(t-\eps,0))\int_{t-\eps}^t\int_\Rd \Lambda(t-s,-y)dyds. 
\end{align*}
Applying the triangular inequality, we have
\begin{align}\label{eq:proof31}
  \bigg|\int_\R\Delta_h^n\phi(y)\kappa(dy)\bigg| 
  & = \big|\E\big[|\sigma(u(t,0))|^n\Delta_h^n\phi(u(t,0))\big]\big| \notag\\
  & \leq \big|\E\big[(|\sigma(u(t,0))|^n-|\sigma(u(t-\eps,0))|^n)\Delta_h^n\phi(u(t,0))\big]\big| \notag\\
  & \quad + \big|\E\big[|\sigma(u(t-\eps,0))|^n(\Delta_h^n\phi(u(t,0)) - \Delta_h^n\phi(u^\eps(t,0)))\big]\big| \notag\\
  & \quad + \big|\E\big[|\sigma(u(t-\eps,0))|^n\Delta_h^n\phi(u^\eps(t,0))\big]\big|.
\end{align}

Remember that $\|\Delta_h^n\phi\|_{\caC^\alpha_b}\leq C_n\|\phi\|_{\caC^\alpha_b}$. Consequently, 
\[ |\Delta_h^n\phi(x)| = |\Delta_h^{n-1}\phi(x) - \Delta_h^{n-1}\phi(x+h)| \leq C_{n-1}\|\phi\|_{\caC^\alpha_b}|h|^\alpha, \]
Using this fact, the first term on the right-hand side of the inequality in \eqref{eq:proof31} can be bounded as follows:
\begin{align}\label{eq:bounddeltahn2}
  \big|\E\big[ & (|\sigma(u(t,0))|^n - |\sigma(u(t-\eps,0))|^n)\Delta_h^n\phi(u(t,0))\big]\big| \notag\\
  & \leq C_n\|\phi\|_{\caC^\alpha_b}|h|^\alpha\E\big[\big||\sigma(u(t,0))|^n-|\sigma(u(t-\eps,0))|^n\big|\big].
\end{align}
Apply the equality $x^n-y^n = (x-y)(x^{n-1}+x^{n-2}y + \ldots + xy^{n-2}+y^{n-1})$ along with the Lipschitz continuity of $\sigma$ and H\"older's inequality, to obtain 
\begin{align}\label{eq:boundpolynomial}
  & \E\big[\big||\sigma(u(t,0))|^n-|\sigma(u(t-\eps,0))|^n\big|\big] \notag\\
  & \leq \E\bigg[ \big|\sigma(u(t,0))-\sigma(u(t-\eps,0))\big|\sum_{j=0}^{n-1} |\sigma(u(t,0))|^j|\sigma(u(t,0))|^{n-1-j}\bigg] \notag\\
  & \leq C\Big(\E\big[\big|u(t,0) - u(t-\eps,0)\big|^2\big]\Big)^{1/2} \bigg(\E\bigg[\bigg(\sum_{j=0}^{n-1} |\sigma(u(t,0))|^j|\sigma(u(t,0))|^{n-1-j}\bigg)^2\bigg]\bigg)^{1/2} \notag\\
  & \leq C_n\Big(\E\big[\big|u(t,0) - u(t-\eps,0)\big|^2\big]\Big)^{1/2} \notag\\
  & \leq C_n\eps^{\delta/2},
\end{align}
where we have used that $\sigma$ has linear growth, also that $u(t,0)$ has finite moments of any order and \eqref{eq:eps^delta}.
Thus,
\begin{equation}
\label{3.2}
\big|\E\big[(|\sigma(u(t,0))|^n - |\sigma(u(t-\eps,0))|^n)\Delta_h^n\phi(u(t,0))\big]\big| \leq C_n\|\phi\|_{\caC^\alpha_b}|h|^\alpha\eps^{\delta/2}. 
\end{equation}

With similar arguments,
\begin{align}
\label{3.3}
  & \big|\E\big[|\sigma(u(t-\eps,0))|^n(\Delta_h^n\phi(u(t,0)) - \Delta_h^n\phi(u^\eps(t,0)))\big]\big|\notag \\
  & \leq C_n\|\phi\|_{\caC^\alpha_b}\E\big[|\sigma(u(t-\eps,0))|^n|u(t,0)-u^\eps(t,0)|^\alpha\big] \notag \\
  & \leq C_n\|\phi\|_{\caC^\alpha_b}\big(\E\big[|u(t,0)-u^\eps(t,0)|^2\big]\big)^{\alpha/2}\big(\E\big[|\sigma(u(t-\eps,0))|^{2n/(2-\alpha)}\big]\big)^{1-\alpha/2} \notag \\
  & \leq C_n\|\phi\|_{\caC^\alpha_b}\eps^{\delta\alpha/2}\big(g_1(\eps) + g_2(\eps)\big)^{\alpha/2},
\end{align}
where in the last inequality we have used the upper bound stated in \cite[Lemma 2.5]{sanzsuess3}. It is very easy to adapt the proof of this lemma to the context of this section.
Note that the constant $C_n$ in the previous equation does not depend on $\alpha$ because
\begin{align*}
  \big(\E\big[|\sigma(u(t-\eps,0))|^{2n/(2-\alpha)}\big]\big)^{1-\alpha/2} 
  & \leq \big(\E\big[(|\sigma(u(t-\eps,0))|\vee1)^{2n}\big]\big)^{1-\alpha/2} \\
  & \leq \E\big[|\sigma(u(t-\eps,0))|^{2n}\vee1\big].
\end{align*}

Now we focus on the third term on the right-hand side of the inequality in \eqref{eq:proof31}.  Let $p_\eps$ denote the density of the zero mean Gaussian random variable
$\int_{t-\eps}^t\int_\Rd\Lambda(t-s,-y)M(ds,dy)$, which is independent of the $\sigma$-field $\scrF_{t-\eps}$ and has variance
\begin{equation*}
g(\eps):=\int_0^\eps ds \int_{\Rd} \mu(d\xi) |\caF\Lambda(s)(\xi)|^2 \geq C\eps^{\gamma}.
\end{equation*} 
In the decomposition \eqref{eq:addueps}, the random variable $U^{\eps}(t,0)$ is $\scrF_{t-\eps}$-measurable. Then, by conditioning with respect to $\scrF_{t-\eps}$ and using a change of variables, we obtain
\begin{align*}
	& \left|\E\left[|\sigma(u(t-\eps,0))|^n\Delta_h^n\phi(u^\eps(t,0))\right]\right| \\
	&=\big|\E\big[\E\big[1_{\{\sigma(u(t-\eps,0))\ne 0\}}|\sigma(u(t-\eps,0))|^n\Delta_h^n\phi(u^\eps(t,0))\big|\scrF_{t-\eps}\big]\big]\big|\\
  & = \bigg|\E\bigg[1_{\{\sigma(u(t-\eps,0))\ne 0\}}\int_\R |\sigma(u(t-\eps,0))|^n\Delta_h^n\phi(U_t^\eps + \sigma(u(t-\eps,0))y)p_{\eps}(y)dy\bigg]\bigg| \\
  & = \bigg|\E\bigg[1_{\{\sigma(u(t-\eps,0))\ne 0\}}\int_\R |\sigma(u(t-\eps,0))|^n\\
  &\qquad \times\phi(U_t^\eps + \sigma(u(t-\eps,0))y)\Delta_{-\sigma(u(t-\eps,0))^{-1}h}^n p_{\eps}(y)dy\bigg]\bigg| \\
  & \leq \|\phi\|_\infty \E\bigg[1_{\{\sigma(u(t-\eps,0))\ne 0\}}|\sigma(u(t-\eps,0))|^n\int_\R\big|\Delta_{-\sigma(u(t-\eps,0))^{-1}h}^np_{\eps}(y)\big|dy\bigg].
\end{align*}  

On the set $\{\sigma(u(t-\eps,0))\ne 0\}$, the integral in the last term can be bounded as follows, 
\begin{align*}
	\int_\R\big|\Delta_{-\sigma(u(t-\eps,0))^{-1}h}^np_{\eps}(y)\big|dy
  & \leq C_n |\sigma(u(t-\eps,0))|^{-n}|h|^n \|p^{(n)}_{\eps}\|_{L^1(\R)} \\
  & \leq C_n |\sigma(u(t-\eps,0))|^{-n}|h|^n g(\eps)^{-n/2},
\end{align*}  
where we have used the property $\Vert \Delta_h^nf\Vert_{L^1(\R)}\le C_n |h|^n \Vert f^{(n)}\Vert_{L^1(\R)}$, and also that $\|p^{(n)}_{\eps}\|_{L_1} = (g(\eps))^{-n/2}\leq C_n\eps^{-n\gamma/2}$
(see e.g. \cite[Lemma 2.3]{sanzsuess3}).

Substituting this into the previous inequality yields
\begin{equation}
\label{3.4}
\big|\E\big[|\sigma(u(t-\eps,0))|^n\Delta_h^n\phi(u^\eps(t,0))\big]\big| \leq C_n\|\phi\|_{\caC^\alpha_b}|h|^n \eps^{-n\gamma/2}, 
\end{equation}
because $\|\phi\|_{\infty}\leq\|\phi\|_{\caC^\alpha_b}$.

With \eqref{eq:proof31}, \eqref{3.2}, \eqref{3.3}, \eqref{3.4}, we have
\begin{align}
\label{3.5}
  & \bigg|\int_\R\Delta_h^n\phi(y)\kappa(dy)\bigg|\notag \\
  & \leq C_n\|\phi\|_{\caC^\alpha_b}\Big(|h|^\alpha\eps^{\delta/2} + \eps^{\delta\alpha/2}\big(g_1(\eps) + g_2(\eps)\big)^{\alpha/2} + |h|^n \eps^{-n\gamma/2}\Big)\notag \\
  & \leq C_n\|\phi\|_{\caC^\alpha_b}\Big(|h|^\alpha\eps^{\delta/2} + \eps^{(\delta+\gamma_1)\alpha/2} + \eps^{(\delta+\gamma_2)\alpha/2} + |h|^n \eps^{-n\gamma/2}\Big)\notag \\
  & \leq C_n\|\phi\|_{\caC^\alpha_b}\Big(|h|^\alpha\eps^{\delta/2} + \eps^{\gamma\bar\gamma\alpha/2} + |h|^n \eps^{-n\gamma/2}\Big)
\end{align}

Let $\eps = \tfrac{1}{2}t|h|^\rho$, with $\rho = 2n/(\gamma n+\gamma\bar\gamma\alpha)$. With this choice, the last term in \eqref{3.5} is equal to
\begin{align*}
  C_n\|\phi\|_{\caC^\alpha_b}\Big(|h|^{\alpha+ \frac{n\delta}{\gamma(n+\bar\gamma\alpha)}} + |h|^{\frac{n\bar\gamma\alpha}{n+\bar\gamma\alpha}}\Big).
\end{align*}
Since $\gamma_1\le \gamma$, by the definition of $\bar\gamma$, we obtain 
\begin{equation*}
\bar\gamma-1 = \frac{\min\{\gamma_1,\gamma_2\}}{\gamma} + \frac{\delta}{\gamma} - 1 \leq \frac{\delta}{\gamma}. 
\end{equation*}
Fix $\zeta\in(0,\bar\gamma-1)$. We can choose $n\in\N$ sufficiently large and $\alpha$ sufficiently close to $1$, such that
\begin{equation*}
 \alpha+ \frac{n\delta}{\gamma(n+\bar\gamma\alpha)} > \zeta + \alpha\quad\text{and}\quad\frac{n\bar\gamma\alpha}{n+\bar\gamma\alpha}>\zeta+\alpha.  
 \end{equation*}
This finishes the proof of the theorem.
\end{proof}
\end{theorem}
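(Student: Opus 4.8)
The plan is to verify the hypotheses of Lemma~\ref{lem:existencedensity} for the measure $\kappa(dy) = |\sigma(y)|^n\,(\P\circ u(t,0)^{-1})(dy)$, for a suitable choice of the integer $n$, exploiting spatial stationarity to reduce to the point $x=0$. The key idea is to introduce a perturbed random variable $u^\eps(t,0)$ obtained by freezing the coefficients $\sigma$ and $b$ at time $t-\eps$ in the last time increment $[t-\eps,t]$; this makes the stochastic integral over $[t-\eps,t]$ a centered Gaussian random variable with explicit variance $g(\eps)$, independent of $\scrF_{t-\eps}$. Then I would split $\E\big[|\sigma(u(t,0))|^n\Delta_h^n\phi(u(t,0))\big]$ into three pieces by the triangle inequality as in \eqref{eq:proof31}: (i) replacing $|\sigma(u(t,0))|^n$ by $|\sigma(u(t-\eps,0))|^n$, (ii) replacing the argument $u(t,0)$ by $u^\eps(t,0)$ inside $\Delta_h^n\phi$, and (iii) the main term involving $u^\eps(t,0)$ and the frozen weight.

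For step (i), I would use the elementary bound $|\Delta_h^n\phi(x)|\le C_{n-1}\|\phi\|_{\caC_b^\alpha}|h|^\alpha$ (obtained by writing $\Delta_h^n$ as a first difference of $\Delta_h^{n-1}$), then factor $x^n-y^n$, apply the Lipschitz property and linear growth of $\sigma$, H\"older's inequality, the finiteness of all moments of $u(t,0)$, and the time-increment estimate \eqref{eq:eps^delta} to get a bound of order $\|\phi\|_{\caC_b^\alpha}|h|^\alpha\eps^{\delta/2}$. For step (ii), I would again use the $|h|^\alpha$-H\"older bound on $\Delta_h^n\phi$ together with H\"older's inequality in the expectation, splitting off the factor $|\sigma(u(t-\eps,0))|^n$ with exponent $2n/(2-\alpha)$ (bounding it by $\E[|\sigma(u(t-\eps,0))|^{2n}\vee 1]$ so the constant is uniform in $\alpha$), and invoking the $L^2$-estimate for $u(t,0)-u^\eps(t,0)$ adapted from \cite[Lemma 2.5]{sanzsuess3}, yielding a bound $\|\phi\|_{\caC_b^\alpha}\eps^{\delta\alpha/2}(g_1(\eps)+g_2(\eps))^{\alpha/2}$, which by the polynomial hypotheses on the quantities involved is controlled by $\|\phi\|_{\caC_b^\alpha}\eps^{\gamma\bar\gamma\alpha/2}$.

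Step (iii) is the heart of the argument. Conditioning on $\scrF_{t-\eps}$ and using that $U^\eps(t,0)$ is $\scrF_{t-\eps}$-measurable, I would write the conditional expectation as an integral against the Gaussian density $p_\eps$ of the last increment; a change of variables turns $\Delta_h^n$ acting on $\phi(U^\eps_t+\sigma(u(t-\eps,0))y)$ into $\Delta^n_{-\sigma(u(t-\eps,0))^{-1}h}$ acting on $p_\eps(y)$ (this is where the restriction to the set $\{\sigma(u(t-\eps,0))\ne 0\}$ enters, and hence the localization of the density to $\{\sigma\ne0\}$). Pulling $\phi$ out in sup-norm and using $\|\Delta_h^n f\|_{L^1}\le C_n|h|^n\|f^{(n)}\|_{L^1}$ together with $\|p_\eps^{(n)}\|_{L^1}=g(\eps)^{-n/2}\le C_n\eps^{-n\gamma/2}$ (from \cite[Lemma 2.3]{sanzsuess3}), the weight $|\sigma(u(t-\eps,0))|^n$ is exactly cancelled by the factor $|\sigma(u(t-\eps,0))|^{-n}$ coming from the rescaling, leaving the clean bound $C_n\|\phi\|_{\caC_b^\alpha}|h|^n\eps^{-n\gamma/2}$.

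Combining the three bounds gives, for every $0<\eps<t$,
\[
\bigg|\int_\R\Delta_h^n\phi(y)\kappa(dy)\bigg|\le C_n\|\phi\|_{\caC_b^\alpha}\Big(|h|^\alpha\eps^{\delta/2}+\eps^{\gamma\bar\gamma\alpha/2}+|h|^n\eps^{-n\gamma/2}\Big).
\]
The final step is the optimization over $\eps$: choosing $\eps=\tfrac12 t|h|^\rho$ with $\rho=2n/(\gamma n+\gamma\bar\gamma\alpha)$ balances the second and third terms, producing a bound of the form $C_n\|\phi\|_{\caC_b^\alpha}\big(|h|^{\alpha+n\delta/(\gamma(n+\bar\gamma\alpha))}+|h|^{n\bar\gamma\alpha/(n+\bar\gamma\alpha)}\big)$. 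Since $\bar\gamma-1\le\delta/\gamma$ (using $\gamma_1\le\gamma$), for any prescribed $\zeta\in(0,\bar\gamma-1)$ one can take $n$ large and $\alpha$ close to $1$ so that both exponents exceed $\zeta+\alpha$; then Lemma~\ref{lem:existencedensity} applies with $a=\zeta+\alpha$ and $\alpha$ as the H\"older index, giving a density in $B^\zeta_{1,\infty}$ on $\{\sigma\ne0\}$, and letting $\zeta\uparrow\bar\gamma-1$ covers all $\beta\in(0,\bar\gamma-1)$. I expect the main obstacle to be the bookkeeping in step (iii) --- ensuring the conditioning, change of variables, and the transfer of the difference operator from $\phi$ to $p_\eps$ are carried out rigorously, and that the resulting constants are genuinely independent of $h$, $\eps$, and (in step (ii)) of $\alpha$ --- together with verifying that \cite[Lemma 2.5]{sanzsuess3} indeed adapts to the unbounded-$\sigma$ setting via the linear-growth bound and finite moments.
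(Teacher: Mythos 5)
Your proposal reproduces the paper's argument essentially verbatim: the same measure $\kappa$, the same frozen-coefficient approximation $u^\eps(t,0)$, the same three-term decomposition with the bounds $|h|^\alpha\eps^{\delta/2}$, $\eps^{\gamma\bar\gamma\alpha/2}$ and $|h|^n\eps^{-n\gamma/2}$, and the same optimization $\eps=\tfrac12 t|h|^\rho$ with $\rho=2n/(\gamma n+\gamma\bar\gamma\alpha)$ before invoking Lemma~\ref{lem:existencedensity}. It is correct and takes the same route as the paper.
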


\begin{remark}
\begin{enumerate}[label=(\roman{enumi}),ref=(\roman{enumi})]
\item Assume that $\sigma$ is bounded from above but not necessary bounded away from zero. 
Following the lines of the proof of Theorem \ref{t3.1} we can also show the existence of a density without assuming the existence of moments of $u(t,x)$ of order higher than $2$. This applies in particular to SPDEs whose fundamental solutions are general distributions as treated in \cite{conusdalang}, extending the result on absolute continuity given in \cite[Theorem 2.1]{sanzsuess3}.
\item Unlike \cite[Theorem 2.1]{sanzsuess3}, the conclusion on the space to which the density belongs is less precise. We do not know whether the order $\bar\gamma-1$ is optimal.
\end{enumerate}
\end{remark}


\section{Ambit random fields}
\label{s3}
In this section we prove the absolute continuity of the law of a random variable generated by an ambit field at a fixed point $(t,x)\in[0,T]\times\Rd$. The methodology we use is very much inspired by \cite{debusschefournier}. Ambit fields where introduced in \cite{bns} with the aim of studying turbulence flows, see also the survey papers \cite{benth,podolskij}. They are stochastic processes indexed by $(t,x)\in[0,T]\times\Rd$ of the form
\begin{equation}\label{eq:ambitfield}
	X(t,x) = x_0 + \iint_{A_t(x)} g(t,s;x,y)\sigma(s,y)L(ds,dy) + \iint_{B_t(x)} h(t,s;x,y)b(s,y)dyds,
\end{equation}
where $x_0\in\R$, $g,h$ are deterministic functions subject to some integrability and regularity conditions, $\sigma,b$ are stochastic processes, $A_t(x),B_t(x)\subseteq[0,t]\times\Rd$ are the so-called ambit sets. The stochastic process $L$ is a {\it L\'evy basis} on the Borel sets $\caB([0,T]\times\Rd)$. More precisely, for any $B\in\caB([0,T]\times\Rd)$ the random variable $L(B)$ has an infinitely divisible distribution; 
given $B_1,\ldots,B_k$ disjoint sets of $B\in\caB([0,T]\times\Rd)$, the random variables $L(B_1),\ldots,L(B_k)$ are independent; and
for any sequence of disjoint sets $(A_j)_{j\in\N}\subset \caB([0,T]\times\Rd)$,
 \[ L(\cup_{j=1}^\infty A_j) = \sum_{j=1}^\infty L(A_j), \quad \P\text{-almost surely}. \]
Throughout the section, we will consider the natural filtration generated by $L$, i.e.\ for all $t\in[0,T]$,
\[ \scrF_t := \sigma(L(A); A\in[0,t]\times\Rd, \lambda(A)<\infty). \]
 
For deterministic integrands, the stochastic integral in \eqref{eq:ambitfield} is defined as in \cite{rajputposinski}. In the more general setting 
of \eqref{eq:ambitfield}, one can use the theory developed in \cite{chongkluppelberg}. We refer the reader to these references for the specific required hypotheses on $g$ and $\sigma$.

The class of L\'evy bases considered in this section are described by infinite divisible distributions of pure-jump, stable-like type. More explicitly, as in 
\cite[Proposition 2.4]{rajputposinski}, we assume that for any $B\in\caB([0,T]\times\Rd)$,
\[ \log\E\big[\exp(\ii\xi L(B))\big] = \int_{[0,T]\times\Rd} \lambda(ds,dy)\int_\R \rho_{s,y}(dz) \big(\exp(\ii\xi z - 1 - \ii\xi z\Infkt{[-1,1]}(z))\big), \]
where $\lambda$ is termed the {\it control measure} on the state space and $(\rho_{s,y})_{(s,y)\in [0,T]\times\Rd}$ is a family of L\'evy measures satisfying
\[ \int_\R \min\{1,z^2\}\rho_{s,y}(dz) = 1,\ \lambda-{\text{a.s.}}.\]

Throughout this section, we will consider the following set of assumptions on $(\rho_{s,y})_{(s,y)\in [0,T]\times\Rd}$ and on $\lambda$.

\begin{assumption}\label{ass:stable}
  Fix $(t,x)\in[0,T]\times\Rd$ and $\alpha\in(0,2)$, and for any $a>0$ let $\caO_a:=(-a,a)$. Then,
  \begin{enumerate}[label=(\roman{enumi}),ref=(\roman{enumi})]
    \item\label{itm:stablei} for all $\beta\in[0,\alpha)$ there exists a nonnegative function $C_\beta\in L^1(\lambda)$ such that for all $a>0$, 
    $$\int_{(\caO_a)^c} |z|^\beta \rho_{s,y}(dz)\leq C_\beta(s,y)a^{\beta-\alpha},  \ \lambda-{\text{a.s.}};$$
    \item\label{itm:stableii} there exists a non-negative function $\bar C\in L^1(\lambda)$ such that for all $a>0$, 
    $$\int_{\caO_a}|z|^2\rho_{s,y}(dz)\leq \bar C(s,y)a^{2-\alpha},  \ \lambda-{\text{a.s.}};$$
    \item\label{itm:stableiii} there exists a nonnegative function $c\in L^1(\lambda)$ and $r>0$ such that for all $\xi\in\R$ with $|\xi|>r$, 
        \[ \int_\R \big(1-\cos(\xi z)\big)\rho_{s,y}(dz) \geq c(s,y)|\xi|^\alpha, \  \lambda-{\text{a.s.}}.\]
\end{enumerate}
\end{assumption}

\begin{example}
Let 
$\rho_{s,y}(dz) = c_1(s,y)\Infkt{\{z>0\}}z^{-\alpha-1}dz + c_{-1}(s,y)\Infkt{\{z<0\}}|z|^{-\alpha-1}dz$, 
$(s,y)\in [0,T]\times\Rd$, and assume that $c_1, c_{-1}\in L^1(\lambda)$. 
This corresponds to stable distributions (see \cite[Lemma 3.7]{rajputposinski}). One can check that Assumptions \ref{ass:stable} are satisfied with 
 $C=\bar C=c_1 \vee c_{-1}$, and  $c=c_1\wedge c_{-1}$.
\end{example}


\begin{assumption}\label{ass:stable2}
\noindent{\bf (H1)} 
We assume that the deterministic functions $g,h:\{0\leq s<t\leq T\}\times\Rd\times\Rd\to\R$ and the stochastic processes $(\sigma(s,y);(s,y)\in[0,T]\times\Rd)$, $(b(s,y);(s,y)\in[0,T]\times\Rd)$ are such that the integrals on the right-hand side of \eqref{eq:ambitfield} are well-defined (see the conditions in \cite[Theorem 2.7]{rajputposinski} and \cite[Theorem 4.1]{chongkluppelberg}). We also suppose that for any $y\in\Rd$, $p\in[2,\infty)$ we have $\sup_{s\in[0,T]}\E[|\sigma(s,y)|^p]<\infty$.

\noindent {\bf (H2)} 
Let $\alpha$ be as in Assumptions \ref{ass:stable}. There exist $\delta_1,\delta_2>0$ such that for some $\gamma\in(\alpha,2]$ and, if $\alpha\ge 1$, for all $\beta\in[1,\alpha)$, or for $\beta=1$, if $\alpha <1$, 
\begin{align}\label{eq:hoeldercty}
  \E\big[|\sigma(t,x)-\sigma(s,y)|^\gamma\big] & \leq C_\gamma(|t-s|^{\delta_1\gamma} + |x-y|^{\delta_2\gamma}), \\
  \E\big[|b(t,x)-b(s,y)|^\beta\big] & \leq C_\beta(|t-s|^{\delta_1\beta} + |x-y|^{\delta_2\beta}),
\end{align}
for every $(t,x), (s,y)\in[0,T]\times \Rd$, and some $C_\gamma$, $C_\beta>0$.

\noindent{\bf (H3)}
$|\sigma(t,x)|>0$, $\omega$-a.s. 

\noindent{\bf (H4)}
Let $\alpha$, $\bar C$, $C_\beta$ and $c$ as in Assumptions \ref{ass:stable} and $0<\eps<t$. We suppose that
\begin{align}
	&\int_{t-\eps}^t\int_\Rd \Infkt{A_t(x)}(s,y)\bar c(s,y) |g(t,s,x,y)|^\alpha\lambda(ds,dy)<\infty, \label{eq:intcond1}\\
	c\eps^{\gamma_0} \leq &\int_{t-\eps}^t\int_\Rd \Infkt{A_t(x)}(s,y) c(s,y) |g(t,s,x,y)|^\alpha \lambda(ds,dy)<\infty, \label{eq:intcond2}
\end{align}
where in \eqref{eq:intcond1}, $\bar c(s,y)=\bar C(s,y)\vee C_0(s,y)$, and \eqref{eq:intcond2} holds for some $\gamma_0>0$.

\noindent Moreover, there exist constants $C,\gamma_ 1,\gamma_2>0$ and $\gamma>\alpha$ such that 
\begin{align}
  \int_{t-\eps}^t\int_\Rd  \Infkt{A_t(x)}(s,y)  \tilde C_\beta(s,y) |g(t,s,x,y)|^\gamma |t-\eps-s|^{\delta_1\gamma}\lambda(ds,dy) &\le C \eps^{\gamma\gamma_1}, \label{alpha}\\
  \int_{t-\eps}^t\int_\Rd \Infkt{A_t(x)}(s,y) \tilde C_\beta(s,y) |g(t,s,x,y)|^\gamma |x-y|^{\delta_2\gamma}\lambda(ds,dy) & \leq C\eps^{\gamma\gamma_2}. \label{beta}
\end{align}
We also assume that there exist constants $C,\gamma_3,\gamma_4>0$ such that for all $\beta\in[1,\alpha)$, if $\alpha\ge 1$, or for $\beta=1$, if $\alpha<1$,
\begin{align} 
  \int_{t-\eps}^t\int_\Rd \Infkt{B_t(x)}(s,y) |h(t,s,x,y)|^\beta |t-\eps-s|^{\delta_1\beta} dyds & \leq C\eps^{\beta\gamma_3}, \label{gamma}\\
	\int_{t-\eps}^t\int_\Rd \Infkt{B_t(x)}(s,y) |h(t,s,x,y)|^\beta |x-y|^{\delta_2\beta} dyds & \leq C\eps^{\beta\gamma_4}, \label{delta}
\end{align}
where $\tilde C_\beta$ is defined as in Lemma \ref{lem:continuity}.

{\bf (H5)}
The set $A_t(x)$ ``reaches $t$", i.e. there is no $\eps>0$ satisfying $A_t(x)\subseteq [0,t-\eps]\times\Rd$.
\end{assumption}

\begin{remark}
\begin{enumerate}[label=(\roman{enumi}),ref=(\roman{enumi})]
	\item By the conditions in {\bf (H4)}, the stochastic integral in \eqref{eq:ambitfield} with respect to the L\'evy basis is well-defined as a random variable in $L^\beta(\Omega)$ for any $\beta\in(0,\alpha)$ (see Lemma \ref{lem:continuity}). 
  \item One can easily derive some sufficient conditions for the assumptions in {\bf (H4)}. Indeed, suppose that
  \[ \int_{t-\eps}^t\int_\Rd  \Infkt{A_t(x)}(s,y) \tilde C_\beta(s,y) |g(t,s,x,y)|^\gamma \lambda(ds,dy) \leq C\eps^{\gamma\bar\gamma_1}, \]
  then \eqref{alpha} holds with $\gamma_1 = \bar\gamma_1+\delta_1$. If in addition, $A_t(x)$ consist
  of points $(s,y)\in[0,t]\times\Rd$ such that $|x-y| \leq |t-s|^\zeta$, for any $s\in[t-\eps,t]$, and for some $\zeta>0$,
  then \eqref{beta} holds with $\gamma_2 = \bar\gamma_1+\delta_2\zeta$. Similarly, one can derive sufficient conditions for \eqref{gamma}, \eqref{delta}.
	\item The assumption {\bf (H5)} is used in the proof of Theorem \ref{thm:densityambit}, where the law of $X(t,x)$ is compared with that of an approximation $X^\eps(t,x)$, which is infinitely divisible. This distribution is well-defined only if $A_t(x)$ is non-empty in the region $[t-\eps,t]\times \Rd$.
	  \item Possibly, for particular examples of ambit sets $A_t(x)$, functions $g, h$, and stochastic processes $\sigma, b$, the Assumptions \ref{ass:stable2} can be relaxed. However, we prefer to keep this formulation.
\end{enumerate}
\end{remark}

We can now state the main theorem of this section.

\begin{theorem}\label{thm:densityambit}
We suppose that the Assumptions \ref{ass:stable} and \ref{ass:stable2} are satisfied and that
  \begin{equation}\label{eq:conditionambit}  
		\frac{\min\{\gamma_1,\gamma_2,\gamma_3,\gamma_4\}}{\gamma_0} > \frac{1}{\alpha}.
  \end{equation}
  Fix $(t,x)\in(0,T]\times\Rd$. Then the law of the random variable $X(t,x)$ defined by \eqref{eq:ambitfield} is absolutely continuous with respect to the Lebesgue measure.
\end{theorem}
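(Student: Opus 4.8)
The plan is to apply Lemma~\ref{lem:existencedensity} to the measure $\kappa = \P\circ X(t,x)^{-1}$ with $\alpha$ arbitrarily close to $\alpha$ itself (the stability index) and, crucially, to exploit the independence structure of the L\'evy basis in the last time-slice $[t-\eps,t]\times\Rd$. Following the pattern of Theorem~\ref{t3.1} and \cite{debusschefournier}, I would first define an approximation $X^\eps(t,x)$ obtained by freezing the stochastic integrands $\sigma$ and $b$ at time $t-\eps$ (more precisely at the spatial point closest to $x$ in $A_t(x)$, using {\bf (H5)} to guarantee $A_t(x)\cap([t-\eps,t]\times\Rd)\neq\emptyset$), so that
\[
X^\eps(t,x) = U^\eps(t,x) + \iint_{A_t(x)\cap([t-\eps,t]\times\Rd)} g(t,s;x,y)\,\sigma(t-\eps,\cdot)\,L(ds,dy),
\]
where $U^\eps(t,x)$ is $\scrF_{t-\eps}$-measurable and the remaining integral, conditionally on $\scrF_{t-\eps}$, is an infinitely divisible random variable with characteristic exponent controlled by Assumptions~\ref{ass:stable}.

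The next step is the triangular decomposition of $|\E[\Delta_h^n\phi(X(t,x))]|$ into three pieces: (a) the difference coming from replacing $X(t,x)$ by $X^\eps(t,x)$, which by the bound $|\Delta_h^n\phi(z)|\le C_n\|\phi\|_{\caC_b^\alpha}|h|^\alpha$ together with the $L^\beta$-continuity estimates for the stochastic integral (Lemma~\ref{lem:continuity}) and the H\"older hypotheses {\bf (H2)} will be of order $|h|^\alpha(\eps^{\gamma\gamma_1}+\eps^{\gamma\gamma_2}+\eps^{\beta\gamma_3}+\eps^{\beta\gamma_4})^{\text{(power)}}$; and (b) the main term $|\E[\Delta_h^n\phi(X^\eps(t,x))]|$. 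For the latter, I condition on $\scrF_{t-\eps}$ and use the standard Fourier/regularization argument for stable-like infinitely divisible laws: writing $\Delta_h^n\phi$ via its Fourier transform and using that $\widehat{\Delta_h^n\phi}(\xi) = (\e^{\ii h\xi}-1)^n\widehat{\phi}(\xi)$, together with the lower bound in Assumption~\ref{ass:stable}~\ref{itm:stableiii} on $\int(1-\cos(\xi z))\rho_{s,y}(dz)$, one gets that the conditional characteristic function of the frozen integral decays like $\exp(-c\,g(\eps)|\xi|^\alpha)$ on $|\sigma(t-\eps,\cdot)\xi|>r$, where $g(\eps)\ge c\eps^{\gamma_0}$ by \eqref{eq:intcond2} and {\bf (H3)} ensures $|\sigma|>0$ a.s.; hence this term is bounded by $C_n\|\phi\|_{\caC_b^\alpha}|h|^n g(\eps)^{-n/\alpha}\le C_n\|\phi\|_{\caC_b^\alpha}|h|^n\eps^{-n\gamma_0/\alpha}$, plus a negligible contribution from the low-frequency part which can be absorbed since $\phi$ is bounded.

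Collecting the estimates gives a bound of the schematic form
\[
\bigg|\int_\R\Delta_h^n\phi(y)\,\kappa(dy)\bigg| \le C_n\|\phi\|_{\caC_b^\alpha}\Big(|h|^\alpha\,\eps^{\theta} + |h|^n\,\eps^{-n\gamma_0/\alpha}\Big),
\]
with $\theta = \min\{\gamma\gamma_1,\gamma\gamma_2,\beta\gamma_3,\beta\gamma_4\}\cdot(\text{H\"older exponent})$, and then I would optimize by choosing $\eps = \tfrac12 t|h|^\rho$ for a suitable $\rho>0$. Condition~\eqref{eq:conditionambit}, namely $\min\{\gamma_1,\gamma_2,\gamma_3,\gamma_4\}/\gamma_0 > 1/\alpha$, is exactly what makes the resulting exponent of $|h|$ in the first term strictly larger than $\alpha$ (after letting $n\to\infty$ and $\beta\uparrow\alpha$), and the second term also exceeds $\alpha$ for $n$ large; so \eqref{eq:existencedensity} holds with $a>\alpha>\alpha'$ for some admissible pair, and Lemma~\ref{lem:existencedensity} yields absolute continuity.

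The main obstacle I anticipate is the treatment of the frozen stochastic integral's conditional law: unlike the Gaussian case in Theorem~\ref{t3.1}, here the density $p_\eps$ of the infinitely divisible increment does not have an explicit form, so the bound $\|p_\eps^{(n)}\|_{L^1}\le C_n g(\eps)^{-n/\alpha}$ must be extracted purely from the characteristic-function decay provided by Assumption~\ref{ass:stable}~\ref{itm:stableiii}, and one must carefully handle the region $|\xi|\le r/|\sigma|$ where no decay is available (this is why the argument uses $\Delta_h^n\phi$ with $n$ large and splits into $\|\phi\|_\infty$-type and smoothing-type contributions, rather than differentiating $p_\eps$ directly). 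A secondary technical point is verifying that the $L^\beta(\Omega)$-moment and continuity estimates of Lemma~\ref{lem:continuity} survive the freezing of $\sigma$ and that the constants $\tilde C_\beta$ appearing in {\bf (H4)} indeed control the approximation error uniformly in $\eps$; this is where hypotheses \eqref{alpha}--\eqref{delta} are consumed.
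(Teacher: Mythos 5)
Your overall strategy---freeze $\sigma$ and $b$ on the last time slice, condition on $\scrF_{t-\eps}$, integrate by parts discretely against the smooth density of the infinitely divisible increment, and optimize $\eps=\tfrac{t}{2}|h|^\rho$ using \eqref{eq:conditionambit}---is the paper's strategy, and your accounting of where \eqref{eq:conditionambit} enters is essentially right. However, there is one genuine gap: you apply Lemma \ref{lem:existencedensity} to the \emph{unweighted} law $\kappa=\P\circ X(t,x)^{-1}$. In the main term, after conditioning and the change of variables $y\mapsto U^\eps(t,x)+\sigma(t-\eps,x)y$, the discrete integration by parts produces $\Delta^n_{-\sigma(t-\eps,x)^{-1}h}p_{t,x,\eps}$, whose $L^1$-norm is bounded by $C_n|h|^n|\sigma(t-\eps,x)|^{-n}\|p^{(n)}_{t,x,\eps}\|_{L^1}$. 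The factor $|\sigma(t-\eps,x)|^{-n}$ sits inside an expectation, and under {\bf (H3)} one only has $|\sigma(t,x)|>0$ a.s.\ --- no lower bound and no negative moments --- so $\E[|\sigma(t-\eps,x)|^{-n}]$ may be infinite and your claimed bound $C_n\|\phi\|_{\caC_b^\alpha}|h|^n\eps^{-n\gamma_0/\alpha}$ for this term does not follow. The paper's fix is to work instead with the weighted measure $\kappa(dy)$ corresponding to $\E[|\sigma(t,x)|^n\Delta_h^n\phi(X(t,x))]$: the weight $|\sigma(t-\eps,x)|^{n}$ cancels the $|\sigma(t-\eps,x)|^{-n}$ from the integration by parts, at the cost of an additional term $\E[(|\sigma(t,x)|^n-|\sigma(t-\eps,x)|^n)\Delta_h^n\phi(X(t,x))]$ in the triangular decomposition (controlled by {\bf (H1)}--{\bf (H2)}, giving $|h|^\eta\eps^{\delta_1}$), and with {\bf (H3)} invoked only at the very end to pass from absolute continuity of the weighted measure to that of the law itself. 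Your two-term decomposition must therefore become a three-term one.

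A secondary point: for the gradient estimate $\|p^{(n)}_{t,x,\eps}\|_{L^1}\leq C_n(\eps^{\gamma_0}\wedge1)^{-n/\alpha}$ you invoke only the lower bound on $\Re\Psi$ coming from Assumption \ref{ass:stable}\ref{itm:stableiii} and \eqref{eq:intcond2}. The argument via \cite{schilling-sztonyk-wang} also requires the matching \emph{upper} bound $\Re\Psi_X(\xi)\leq C|\xi|^\alpha$, which is where Assumptions \ref{ass:stable}\ref{itm:stablei}--\ref{itm:stableii} and \eqref{eq:intcond1} are consumed; your sketch should make this explicit rather than absorbing the low-frequency region by boundedness of $\phi$ alone.
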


\subsection{Two auxiliary results}
In this subsection we derive two auxiliary lemmas. They play a similar role as those in \cite[Sections 5.1 and 5.2]{debusschefournier}, but our formulation is more general.

\begin{lemma}\label{lem:moment}
  Let $\rho=(\rho_{s,y})_{(s,y)\in[0,T]\times\Rd}$ be a family of L\'evy measures and let $\lambda$ be a control measure. Suppose that  Assumption \ref{ass:stable}\ref{itm:stableii} holds. Then for all $\gamma\in(\alpha,2)$ and all $a\in(0,\infty)$
 \begin{equation*}
  \int_{|z|\leq a} |z|^\gamma\rho_{s,y}(dz) \leq C_{\gamma,\alpha}\bar C(s,y)a^{\gamma-\alpha}, \  \lambda- a.s.,
  \end{equation*}
    where $C_{\gamma,\alpha} = 2^{-\gamma+2}\frac{2^{2-\alpha}}{2^{\gamma-\alpha}-1}$. Hence
    \begin{equation*}
    \int_0^T \int_{\Rd}\int_{|z|\le a} |z|^\gamma \rho_{s,y}(dz)\lambda(ds,dy) \le C a^{\gamma-\alpha}.
    \end{equation*}
\begin{proof} 
It is obtained by the following computations:
\begin{align*}
  \int_{|z|\leq a} |z|^\gamma\rho_{s,y}(dz)
  & = \sum_{n=0}^\infty \int_{\{a2^{-n-1}<|z|\leq a2^{-n}\}} |z|^\gamma \rho_{s,y}(dz) \\
  & \leq \sum_{n=0}^\infty (a2^{-n-1})^{\gamma-2} \int_{\{|z|\leq a2^{-n}\}} |z|^2 \rho_{s,y}(dz) \\
  & \leq \bar C(s,y)\sum_{n=0}^\infty (a2^{-n-1})^{\gamma-2} (a2^{-n})^{2-\alpha} \\
  & \leq C_{\gamma-\alpha}\bar C(s,y)a^{\gamma-\alpha}. \qedhere
\end{align*}
\end{proof}
\end{lemma}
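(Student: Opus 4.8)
The plan is to prove the dyadic estimate by splitting the region $\{|z|\le a\}$ into the annuli $\{a2^{-n-1}<|z|\le a2^{-n}\}$ for $n\ge 0$, exactly as one does when trading a weaker power against a stronger one. On each annulus the crude bound $|z|^\gamma\le (a2^{-n-1})^{\gamma-2}|z|^2$ holds because $\gamma-2<0$ and $|z|>a2^{-n-1}$ there; summing over $n$ and enlarging each annulus to the ball $\{|z|\le a2^{-n}\}$ reduces everything to Assumption \ref{ass:stable}\ref{itm:stableii} applied with radius $a2^{-n}$, which gives $\int_{\{|z|\le a2^{-n}\}}|z|^2\rho_{s,y}(dz)\le \bar C(s,y)(a2^{-n})^{2-\alpha}$, $\lambda$-a.s.

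First I would write the chain of inequalities above, obtaining the geometric series $\sum_{n\ge 0}(a2^{-n-1})^{\gamma-2}(a2^{-n})^{2-\alpha}=2^{2-\gamma}a^{\gamma-\alpha}\sum_{n\ge 0}2^{-n(\gamma-\alpha)}$, where the exponent is positive since $\gamma>\alpha$, so the sum converges to $2^{2-\gamma}\,\frac{1}{1-2^{-(\gamma-\alpha)}}\,a^{\gamma-\alpha}$. A small rearrangement of $2^{2-\gamma}/(1-2^{-(\gamma-\alpha)})$ into $2^{-\gamma+2}\,\frac{2^{2-\alpha}}{2^{\gamma-\alpha}-1}$ matches the constant $C_{\gamma,\alpha}$ claimed in the statement; I would just check this algebraic identity once, paying attention to the off-by-one in $2^{-n-1}$ versus $2^{-n}$, since that is the only place an arithmetic slip could creep in.

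For the second, integrated, inequality I would integrate the pointwise bound $\int_{|z|\le a}|z|^\gamma\rho_{s,y}(dz)\le C_{\gamma,\alpha}\bar C(s,y)a^{\gamma-\alpha}$ against $\lambda(ds,dy)$ over $[0,T]\times\Rd$, using that $\bar C\in L^1(\lambda)$ by Assumption \ref{ass:stable}\ref{itm:stableii}, so the constant $C$ may be taken to be $C_{\gamma,\alpha}\,\|\bar C\|_{L^1(\lambda)}$. There is no real obstacle here; the only mild subtlety is the measurability/a.s.\ qualifier, i.e.\ that the pointwise estimate holds $\lambda$-a.s.\ and the exceptional null set is harmless under integration, and that one should allow $\gamma=2$ or note the hypothesis already excludes it (the harmonic series would diverge at $\gamma=\alpha$ but $\gamma>\alpha$ is assumed). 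The computation displayed in the excerpt already carries out exactly this argument, so the proof is essentially the four-line display plus the verification of the constant.
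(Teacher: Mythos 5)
Your proof is correct and follows exactly the paper's argument: the same dyadic decomposition into annuli $\{a2^{-n-1}<|z|\le a2^{-n}\}$, the same crude bound $|z|^\gamma\le(a2^{-n-1})^{\gamma-2}|z|^2$, enlargement to the ball, Assumption \ref{ass:stable}\ref{itm:stableii}, and summation of the geometric series, followed by integration against $\lambda$ using $\bar C\in L^1(\lambda)$. Your computed constant $2^{2-\gamma}\,2^{\gamma-\alpha}/(2^{\gamma-\alpha}-1)$ is actually slightly smaller than the paper's stated $C_{\gamma,\alpha}$ (they coincide only at $\gamma=2$), so the claimed bound holds a fortiori and this is immaterial.
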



The next lemma provides important bounds on the moments of the stochastic integrals. It plays the role of \cite[Lemma 5.2]{debusschefournier} in the setting of this article.

\begin{lemma}\label{lem:continuity}
  Assume that $L$ is a L\'evy basis with characteristic exponent satisfying Assumptions \ref{ass:stable} for some $\alpha\in(0,2)$. Let $H=(H(t,x))_{(t,x)\in[0,T]\times\Rd}$ be a predictable process. Then for all $0<\beta<\alpha<\gamma\leq2$ and for all $0\le s< t\leq s+1$,
  \begin{align} 
  \label{secondlemma}
		&\E\bigg[\bigg|\int_s^t\int_\Rd \Infkt{A_t(x)}(r,y) g(t,r,x,y)H(r,y)L(dr,dy)\bigg|^\beta\bigg] \notag\\
		& \leq C_{\alpha,\beta,\gamma} |t-s|^{\beta/\alpha-1}\notag\\
		&\qquad \times \bigg(\int_s^t\int_\Rd \Infkt{A_t(x)}(r,y) \tilde{C}_\beta(r,y) |g(t,r,x,y)|^\gamma \E\big[|H(u,y)|^\gamma\big]\lambda(dr,dy)\bigg)^{\beta/\gamma},
  \end{align}
  where $\tilde{C}_{\beta}(r,y)$ is the maximum of $\bar{C}(r,y)$, and  $(C_\beta+C_1)(r,y)$ (see Assumptions \ref{ass:stable} for the definitions). 
\begin{proof}
There exists a Poisson random measure $N$ such that for all $A\in\caB(\Rd)$,
\[ L([s,t]\times A) = \int_s^t\int_A\int_{|z|\leq 1} z \tilde N(dr,dy,dz) + \int_s^t\int_A\int_{|z|> 1}zN(dr,dy,dz)\]
(see e.g. \cite[Theorem 4.6]{pedersen}), where  $\tilde N$ stands for the compensated Poisson random measure $\tilde N(ds,dy,dz) = N(ds,dy,dz) - \rho_{s,y}(dz)\lambda(ds,dy)$.
Then we can write
\begin{equation}\label{eq:proofdis}
  \E\bigg[\bigg|\int_s^t\int_\Rd \Infkt{A_t(x)}(r,y) g(t,r,x,y)H(r,y)L(dr,dy)\bigg|^\beta\bigg] \leq C_\beta\big(I^1_{s,t} + I^2_{s,t} + I^3_{s,t}\big),
\end{equation}
with
\begin{align*}
  I^1_{s,t} & := \E\bigg[\bigg|\int_s^t\int_\Rd \Infkt{A_t(x)}(r,y) \int_{|z|\leq (t-s)^{1/\alpha}} zg(t,r,x,y)H(r,y)\tilde N(dr,dy,dz)\bigg|^\beta\bigg] \\
  I^2_{s,t} & := \E\bigg[\bigg|\int_s^t\int_\Rd \Infkt{A_t(x)}(r,y) \int_{(t-s)^{1/\alpha} < |z|\leq 1} zg(t,r,x,y)H(r,y)\tilde N(dr,dy,dz)\bigg|^\beta\bigg] \\
  I^3_{s,t} & := \E\bigg[\bigg|\int_s^t\int_\Rd \Infkt{A_t(x)}(r,y) \int_{|z|> 1} zg(t,r,x,y)H(r,y)N(dr,dy,dz)\bigg|^\beta\bigg]
\end{align*}
To give an upper bound for the first term, we apply first Burkholder's inequality, then the subadditivity of the function $x\mapsto x^{\gamma/2}$ (since the integral is actually a sum), Jensen's inequality, the isometry of Poisson random measures and Lemma \ref{lem:moment}. We obtain,
\begin{align*}
  I^1_{s,t}
  & \leq C_\beta \E\bigg[\bigg|\int_s^t\int_\Rd \Infkt{A_t(x)}(r,y)\int_{|z|\leq (t-s)^{1/\alpha}}\\
  & \qquad \times  |z|^2 |g(t,r,x,y)|^2 |H(r,y)|^2 N(dr,dy,dz)\bigg|^{\beta/2}\bigg] \\
  & \leq C_\beta \E\bigg[\bigg|\int_s^t\int_\Rd \Infkt{A_t(x)}(r,y)\int_{|z|\leq (t-s)^{1/\alpha}}\\
  &\qquad \times |z|^\gamma |g(t,r,x,y)|^\gamma |H(r,y)|^\gamma N(dr,dy,dz)\bigg|^{\beta/\gamma}\bigg] \\
  & \leq C_\beta \bigg(\E\bigg[\int_s^t\int_\Rd \Infkt{A_t(x)}(r,y)\int_{|z|\leq (t-s)^{1/\alpha}}\\
  &\qquad \times |z|^\gamma |g(t,r,x,y)|^\gamma |H(r,y)|^\gamma N(dr,dy,dz)\bigg]\bigg)^{\beta/\gamma} \\
  & = C_\beta \bigg(\E\bigg[\int_s^t\int_\Rd \Infkt{A_t(x)}(r,y)\bigg(\int_{|z|\leq (t-s)^{1/\alpha}} |z|^\gamma \rho_{r,y}(dz)\bigg)\\
  &\qquad \times |g(t,r,x,y)|^\gamma |H(r,y)|^\gamma \lambda(dr,dy)\bigg]\bigg)^{\beta/\gamma} \\
  & \leq C_{\beta}\big(C_{\gamma,\alpha} (t-s)^{(\gamma-\alpha)/\alpha}\big)^{\beta/\gamma}\\
  &\qquad \times \bigg(\int_s^t\int_\Rd \Infkt{A_t(x)}(r,y) \bar C(r,y) |g(t,r,x,y)|^\gamma \E\big[|H(u,y)|^\gamma\big]\lambda(dr,dy)\bigg)^{\beta/\gamma}. 
\end{align*}
Notice that the exponent $(\gamma-\alpha)/\alpha$ is positive.

With similar arguments but applying now Assumption \ref{ass:stable}(i), the second term in \eqref{eq:proofdis} is bounded by
 \begin{align*}
  I^2_{s,t}
  & \leq C_\beta \E\bigg[\bigg|\int_s^t\int_\Rd \Infkt{A_t(x)}(r,y) \int_{(t-s)^{1/\alpha} < |z|\leq 1} |z|^2\\
  &\qquad \times |g(t,r,x,y)|^2 |H(r,y)|^2 N(dr,dy,dz)\bigg|^{\beta/2}\bigg] \\
  & \leq C_\beta \E\bigg[\int_s^t\int_\Rd \Infkt{A_t(x)}(r,y)\int_{(t-s)^{1/\alpha} < |z|\leq 1} |z|^\beta\\
  &\qquad \times |g(t,r,x,y)|^\beta |H(r,y)|^\beta N(dr,dy,dz)\bigg] \\
  & = C_\beta \E\bigg[\int_s^t\int_\Rd \Infkt{A_t(x)}(r,y)\bigg(\int_{(t-s)^{1/\alpha} < |z|\leq 1} |z|^\beta \rho_{r,y}(dz)\bigg)\\
  &\qquad \times|g(t,r,x,y)|^\beta |H(r,y)|^\beta \lambda(dr,dy)\bigg] \\
  & \leq C_{\beta}(t-s)^{(\beta-\alpha)/\alpha} \E\bigg[\int_s^t\int_\Rd \Infkt{A_t(x)}(r,y) C_\beta(r,y)\\
  &\qquad \times |g(t,r,x,y)|^\beta \big[|H(r,y)|^\beta\lambda(dr,dy)\bigg] \\
  & \leq C_{\beta,\gamma}(t-s)^{(\beta-\alpha)/\alpha} \bigg(\int_s^t\int_\Rd \Infkt{A_t(x)}(r,y) C_\beta(r,y)\\
  &\qquad \times |g(t,r,x,y)|^\gamma \E\big[|H(r,y)|^\gamma\big] \lambda(dr,dy)\bigg)^{\beta/\gamma},
\end{align*}
where in the last step we have used H\"older's inequality with respect to the finite measure $C_\beta(r,y)\lambda(dr,dy)$.

Finally, we bound the third term in \eqref{eq:proofdis}. Suppose first that $\beta\leq1$. Using the subadditivity of $x\mapsto x^\beta$ and Lemma \ref{lem:moment} (i) yields
\begin{align*}
  I^3_{s,t}
  & \leq C_\beta \E\bigg[\int_s^t\int_\Rd \Infkt{A_t(x)}(r,y)\int_{|z|> 1} |z|^\beta |g(t,r,x,y)|^\beta |H(r,y)|^\beta N(dr,dy,dz)\bigg] \\
  & \leq C_\beta \E\bigg[\int_s^t\int_\Rd \Infkt{A_t(x)}(r,y)\bigg(\int_{|z|> 1} |z|^\beta \rho_{r,y}(dz)\bigg)\\
  &\qquad \times |g(t,r,x,y)|^\beta |H(r,y)|^\beta \lambda(dr,dy)\bigg] \\
  & \leq C_\beta \E\bigg[\int_s^t\int_\Rd \Infkt{A_t(x)}(r,y) C_\beta(r,y) |g(t,r,x,y)|^\beta |H(r,y)|^\beta \lambda(dr,dy)\bigg] \\
  & \leq C_\beta \bigg(\int_s^t\int_\Rd \Infkt{A_t(x)}(r,y) C_\beta(r,y) |g(t,r,x,y)|^\gamma \E\big[|H(r,y)|^\gamma\big] \lambda(dr,dy)\bigg)^{\beta/\gamma},
\end{align*}
where in the last step we have used H\"older's inequality with respect to the finite measure $C_\beta(r,y)\lambda(dr,dy)$.

Suppose now that $\beta>1$ (which implies that $\alpha>1$). We apply H\"older's inequality with respect to the finite measure $C_1(r,y)\lambda(dr,dy)$ and Assumption \ref{ass:stable}(i) 
\begin{align*}
  I^3_{s,t}
  & \leq 2^{\beta-1}\E\bigg[\bigg|\int_s^t\int_\Rd \Infkt{A_t(x)}(r,y)\int_{|z|> 1} zg(t,r,x,y)H(r,y)\tilde N(dr,dy,dz)\bigg|^\beta\bigg] \\
  & \quad + 2^{\beta-1}\E\bigg[\bigg|\int_s^t\int_\Rd \Infkt{A_t(x)}(r,y)\left(\int_{|z|> 1} |z| \rho_{r,y}(dz)\right)\\
  &\qquad \times g(t,r,x,y)H(r,y)\lambda(dr,dy)\bigg|^\beta\bigg] \\
  & \leq C_\beta \E\bigg[\bigg|\int_s^t\int_\Rd \Infkt{A_t(x)}(r,y)\int_{|z|> 1} |z|^2 |g(t,r,x,y)|^2\\
  &\qquad \times |H(r,y)|^2 N(dr,dy,dz)\bigg|^{\beta/2}\bigg] \\
  & \quad + C_\beta \E\bigg[\bigg|\int_s^t\int_\Rd \Infkt{A_t(x)}(r,y) C_1(r,y) |g(t,r,x,y)|H(r,y)|\lambda(dr,dy)\bigg|^\beta\bigg] \\
  & \leq C_\beta \E\bigg[\int_s^t\int_\Rd \Infkt{A_t(x)}(r,y)\int_{|z|> 1} |z|^\beta |g(t,r,x,y)|^\beta |H(r,y)|^\beta N(dr,dy,dz)\bigg] \\
  & \quad + C_\beta \bigg(\int_s^t\int_\Rd  C_1(r,y)\lambda(dr,dy)\bigg)^{\beta-1}\\
  &\qquad \times \int_s^t\int_\Rd \Infkt{A_t(x)}(r,y) C_1(r,y) |g(t,r,x,y)|^\beta\E\big[|H(r,y)|^\beta\big]\lambda(dr,dy) \\
  & \leq C_\beta \E\bigg[\int_s^t\int_\Rd \Infkt{A_t(x)}(r,y) (C_1(r,y)+C_\beta(r,y))\\
  &\qquad \times |g(t,r,x,y)|^\beta |H(u,y)|^\beta \lambda(dr,dy)\bigg] \\
  & \leq C_\beta \bigg(\int_s^t\int_\Rd \Infkt{A_t(x)}(r,y) (C_1(r,y)+C_\beta(r,y))\\
  &\qquad \times |g(t,r,x,y)|^\gamma \E\big[|H(u,y)|^\gamma\big] \lambda(dr,dy)\bigg)^{\beta/\gamma},
\end{align*}
where in the last step we have used H\"older's inequality with respect to the finite measure $(C_1(r,y) + C_\beta(r,y))\lambda(dr,dy)$. We are assuming $0<t-s\le1$, and $0<\beta<\alpha$. Hence, the estimates on the terms $I^i_{s,t}$, $i=1,2,3$ imply \eqref{secondlemma}
\end{proof}
\end{lemma}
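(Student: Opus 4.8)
The plan is to reduce the stochastic integral against the L\'evy basis to an integral against a Poisson random measure, and then to split the range of jump sizes at the single scale $a=(t-s)^{1/\alpha}$, which is exactly the scale responsible for the prefactor $|t-s|^{\beta/\alpha-1}$. First I would invoke the L\'evy--It\^o representation
\[ L([s,t]\times A) = \int_s^t\int_A\int_{|z|\le 1} z\,\tilde N(dr,dy,dz) + \int_s^t\int_A\int_{|z|> 1} z\,N(dr,dy,dz), \]
with $\tilde N(dr,dy,dz)=N(dr,dy,dz)-\rho_{r,y}(dz)\lambda(dr,dy)$, and decompose the integral accordingly into three pieces: the compensated small jumps $|z|\le(t-s)^{1/\alpha}$, the compensated intermediate jumps $(t-s)^{1/\alpha}<|z|\le 1$, and the uncompensated large jumps $|z|>1$. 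Using $|a+b+c|^\beta\le C_\beta(|a|^\beta+|b|^\beta+|c|^\beta)$ it then suffices to bound the $\beta$-th moment of each of the three resulting terms $I^1_{s,t}$, $I^2_{s,t}$, $I^3_{s,t}$ separately.

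For the small-jump term $I^1_{s,t}$ I would apply Burkholder's inequality to pass to the $\beta/2$-moment of the quadratic variation, an integral of $|z|^2|g|^2|H|^2$ against $N$. Since the integral against $N$ is really a finite sum over jumps and $\gamma\le2$, the subadditivity of $x\mapsto x^{\gamma/2}$ lets me replace the exponent $2$ by $\gamma$ at the cost of passing from the outer power $\beta/2$ to $\beta/\gamma$; Jensen's inequality (valid because $\beta/\gamma<1$) moves the expectation inside, and the mean/isometry formula for $N$ rewrites it as a $\lambda$-integral of $\int_{|z|\le(t-s)^{1/\alpha}}|z|^\gamma\rho_{r,y}(dz)$. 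Lemma \ref{lem:moment} bounds this inner integral by $C_{\gamma,\alpha}\bar C(r,y)(t-s)^{(\gamma-\alpha)/\alpha}$, and raising the whole expression to the power $\beta/\gamma$ yields the positive factor $(t-s)^{(\gamma-\alpha)\beta/(\alpha\gamma)}$ together with the claimed $\gamma$-integral of $|g|^\gamma\E[|H(r,y)|^\gamma]$ against $\bar C$.

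For the intermediate term $I^2_{s,t}$ the same Burkholder step reduces matters to the $\beta/2$-moment of a sum, and here I would instead use subadditivity of $x\mapsto x^{\beta/2}$ (now $\beta/2<1$) to lower the exponent directly to $\beta$, so that the relevant inner integral is $\int_{(t-s)^{1/\alpha}<|z|\le1}|z|^\beta\rho_{r,y}(dz)$, which by Assumption \ref{ass:stable}\ref{itm:stablei} applied with the \emph{lower} cut-off $a=(t-s)^{1/\alpha}$ is at most $C_\beta(r,y)(t-s)^{(\beta-\alpha)/\alpha}$; a final H\"older inequality against the finite measure $C_\beta(r,y)\lambda(dr,dy)$ upgrades the $\beta$-integrand to the $\gamma$-integrand raised to $\beta/\gamma$. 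The large-jump term $I^3_{s,t}$ splits into two cases: for $\beta\le1$, subadditivity of $x\mapsto x^\beta$ together with Assumption \ref{ass:stable}\ref{itm:stablei} (with $a=1$) suffices; for $\beta>1$ (which forces $\alpha>1$) subadditivity fails, so I would first recompensate the integral, bound the compensated martingale part by Burkholder as before, and control the compensator part $\int_{|z|>1}|z|\rho_{r,y}(dz)$ by H\"older's inequality against the finite measure $C_1(r,y)\lambda(dr,dy)$.

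The main obstacle is the bookkeeping of the powers of $|t-s|$ and verifying that the single cut-off scale $a=(t-s)^{1/\alpha}$ reproduces the stated prefactor. The term $I^2_{s,t}$ already carries the exact exponent, since $(\beta-\alpha)/\alpha=\beta/\alpha-1$, a \emph{negative} power because $\beta<\alpha$; by contrast $I^1_{s,t}$ yields, after raising to $\beta/\gamma$, the \emph{positive} power $(t-s)^{(\gamma-\alpha)\beta/(\alpha\gamma)}$, while $I^3_{s,t}$ produces no $|t-s|$ factor at all. Since $0<t-s\le1$, both of the latter are dominated by $(t-s)^{\beta/\alpha-1}$ and may be absorbed into the constant. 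The only genuinely delicate point is the case $\beta>1$ in $I^3_{s,t}$, where recompensation is forced and integrability of the large jumps, encoded in $C_1\in L^1(\lambda)$ together with Assumption \ref{ass:stable}\ref{itm:stablei}, is essential; this is exactly why the final constant features $\tilde C_\beta=\bar C\vee(C_\beta+C_1)$ rather than merely $\bar C\vee C_\beta$.
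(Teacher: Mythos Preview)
Your proposal is correct and follows essentially the same approach as the paper: the same L\'evy--It\^o decomposition, the same three-way split of the jump range at the scale $(t-s)^{1/\alpha}$, the same tools for each piece (Burkholder, subadditivity of the relevant power, Jensen, the Poisson isometry, Assumption~\ref{ass:stable}\ref{itm:stablei} and Lemma~\ref{lem:moment}), the same case distinction $\beta\le1$ versus $\beta>1$ for $I^3_{s,t}$ with recompensation in the latter case, and the same final bookkeeping using $0<t-s\le1$ to absorb the nonnegative powers of $t-s$ from $I^1_{s,t}$ and $I^3_{s,t}$ into the negative power $\beta/\alpha-1$ coming from $I^2_{s,t}$. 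Your explanation of why $\tilde C_\beta=\bar C\vee(C_\beta+C_1)$ emerges from the $\beta>1$ recompensation step is exactly the point.
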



\subsection{Existence of density}
With the help of the two lemmas in the previous subsection, we can now give the proof of Theorem \ref{thm:densityambit}. Fix $(t,x)\in(0,T]\times\Rd$ and let $0<\eps<t$ to be determined later. We define an approximation of the ambit field $X(t,x)$ by
\begin{equation}\label{eq:approxambit}
  X^\eps(t,x) = U^\eps(t,x) + \sigma(t-\eps,x)\int_{t-\eps}^t\int_\Rd \Infkt{A_t(x)}(s,y)g(t,s;x,y)L(ds,dy),  
\end{equation}
where 
\begin{align*}
	U^\eps(t,x) = x_0		& + \int_0^{t-\eps}\int_\Rd \Infkt{A_t(x)}(s,y) g(t,s;x,y)\sigma(s,y)L(ds,dy) \\
											&	+ \int_0^{t-\eps}\int_\Rd \Infkt{B_t(x)}(s,y) h(t,s;x,y)b(s,y)dyds \\
											& + b(t-\eps,x)\int_0^{t-\eps}\int_\Rd \Infkt{B_t(x)}(s,y) h(t,s;x,y)dyds
\end{align*}
Note that $U^\eps(t,x)$ is $\scrF_{t-\eps}$-measurable.

The stochastic integral in \eqref{eq:approxambit} is well defined in the sense of \cite{rajputposinski} and is a random variable having an infinitely divisible distribution. Moreover, the real part of its characteristic exponent is given by
\[ \Re\big(\log \E\big[\exp(i\xi X)\big]\big) = \int_\R \big(1-\cos(\xi z)\big)\rho_f(dz), \]
where
\[ \rho_f(B) = \int_{[0,T]\times\Rd}\int_\R \Infkt{\{zf(s,y)\in B\backslash\{0\}\}}\rho_{s,y}(dz)\lambda(ds,dy). \]

In the setting of this section, the next lemma plays a similar role as  \cite[Lemma 2.3]{sanzsuess3}. It generalizes \cite[Lemma 3.3]{debusschefournier} to the case of L\'evy bases as integrators.

\begin{lemma}\label{lem:existdensstable}
	The Assumptions \ref{ass:stable}, along with  \eqref{eq:intcond1} and \eqref{eq:intcond2} hold. Then, the random variable 
	\[ X:= \int_{t-\eps}^t \int_\Rd \Infkt{A_t(x)}(s,y) g(t,s,x,y)L(ds,dy) \]
	has a $\caC^\infty$-density $p_{t,x,\eps}$, and for all $n\in\N$ there exists a finite constant $C_n>0$ such that $\|p_{t,x,\eps}^{(n)}\|_{L^1(\R)}\leq C_{n,t,x} (\eps^{\gamma_0}\wedge 1)^{-n/\alpha}$.
\begin{proof}
We follow the proof of \cite[Lemma 3.3]{debusschefournier}, which builds on the methods of \cite{schilling-sztonyk-wang}. 
First we show that for $|\xi|$ sufficiently large, and every $t\in(0,T]$, 
\begin{equation}
\label{u&l}
c_{t,x,\eps}|\xi|^\alpha \leq \Re\Psi_{X}(\xi) \leq C|\xi|^\alpha. 
\end{equation}
Indeed, let $r$ be as in Assumption \ref{ass:stable}\ref{itm:stableiii}. Then, for $|\xi| > r$, we have
\begin{align}
\label{lower}
  \Re\Psi_{X}(\xi)
  & = \int_\R \big(1-\cos(\xi z)\big)\rho_f(dz)\notag \\
  & = \int_{t-\eps}^t \int_{\Rd}\lambda(ds,dy) \int_\R \big(1-\cos(\xi z \Infkt{A_t(x)}(s,y) g(t,s,x,y))\big)\rho_{s,y}(dz)\notag\\
  & \geq |\xi|^\alpha\int_{t-\eps}^t\int_\Rd  \Infkt{A_t(x)}(s,y) |g(t,s,x,y)|^\alpha c(s,y)\lambda(ds,dy)\notag \\
  & \ge c_{t,\eps,x} \eps^{\gamma_0}|\xi|^\alpha.
\end{align}
This proves the lower bound in \eqref{u&l} for $|\xi| > r$.

%

In order to prove the upper bound in \eqref{u&l}, we set 
\[ a_{\xi,t,s,x,y} := |\xi|\Infkt{A_t(x)}(s,y)|g(t,s,x,y)| \]
and use the inequality $(1-\cos(x))\leq 2(x^2\wedge1)$ to obtain
\begin{align}\label{real}
  \Re\Psi_X(\xi) 
  & = \int_{t-\eps}^t\int_\Rd \lambda(ds,dy) \int_\R \big(1-\cos(z\xi\Infkt{A_t(x)}(s,y)g(t,s,x,y))\big)\rho_{s,y}(dz)\notag\\
  & \leq 2 \int_{t-\eps}^t\int_\Rd \lambda(ds,dy) \int_\R \big(|z|^2|\xi|^2\Infkt{A_t(x)}(s,y)|g(t,s,x,y)|^2\wedge 1\big)\rho_{s,y}(dz)\notag\\  
  & = 2\int_{t-\eps}^t\int_\Rd \lambda(ds,dy)\int_{|z|\leq a_{\xi,t,s,x,y}^{-1}} |z|^2|\xi|^2\Infkt{A_t(x)}(s,y)|g(t,s,x,y)|^2\rho_{s,y}(dz)\notag \\
  & \quad + 2\int_{t-\eps}^t\int_\Rd \lambda(ds,dy)\int_{|z|\geq a_{\xi,t,s,x,y}^{-1}} \rho_{s,y}(dz).
\end{align}

Then, using Assumption \ref{ass:stable}\ref{itm:stableii}, the first integral in the right-hand side of the last equality 
in \eqref{real} can be bounded as follows:
\begin{align*}
  & \int_{t-\eps}^t\int_\Rd \lambda(ds,dz) |\xi|^2\Infkt{A_t(x)}(s,y)|g(t,s,x,y)|^2 \left(\int_{|z|\leq a_{\xi,t,s,x,y}^{-1}} |z|^2\rho_{s,y}(dz)\right)\\
  & \leq |\xi|^\alpha\int_{t-\eps}^t\int_\Rd \Infkt{A_t(x)}(s,y)|g(t,s,x,y)|^\alpha \bar C(s,y)\lambda(ds,dy)\\
  & \le  C |\xi|^\alpha,
\end{align*}
where in the last inequality, we have used \eqref{eq:intcond1}.

Consider now the last integral in \eqref{real}. By applying Assumption \ref{ass:stable}(i) with $\beta=0$ and \eqref{eq:intcond1}
\begin{align*}
  & \int_{t-\eps}^t\int_\Rd\lambda(ds,dy) \left(\int_{|z|\geq a_{\xi,t,s,x,y}^{-1}} \rho_{s,y}(dz)\right)\\
  & \qquad \le |\xi|^\alpha\int_{t-\eps}^t\int_\Rd\Infkt{A_t(x)}(s,y) C_0(s,y)|g(t,s,x,y)|^\alpha\lambda(ds,dy)\\
  &\qquad \le C |\xi|^\alpha.
\end{align*}
Hence, we have established that
\begin{equation*}
  \Re\Psi_X(\xi) \le C |\xi|^\alpha,
  \end{equation*}
for $|\xi|$ sufficiently large. 

To complete the proof, we can follow the same arguments as in  \cite[Lemma 3.3]{debusschefournier} which relies on the result in \cite[Proposition 2.3]{schilling-sztonyk-wang}. Note that the exponent $\gamma_0$ on the right-hand side of the gradient estimate accounts for the lower bound of the growth of the term in \eqref{eq:intcond2}, which in the case of SDEs is equal to $1$. 
\end{proof}
\end{lemma}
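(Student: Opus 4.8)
The plan is to exploit that $X$ is a stochastic integral of a \emph{deterministic} integrand against the L\'evy basis $L$, so by \cite{rajputposinski} its law is infinitely divisible with characteristic function $\E[\exp(\ii\xi X)]=\exp(-\Psi_X(\xi))$ whose exponent has real part $\Re\Psi_X(\xi)=\int_\R(1-\cos(\xi z))\rho_f(dz)$, where $\rho_f$ is the image L\'evy measure obtained by pushing each $\rho_{s,y}$ forward through multiplication by $f(s,y)=\Infkt{A_t(x)}(s,y)g(t,s,x,y)$ and integrating against $\lambda$. Everything reduces to controlling this real part: since $|\E[\exp(\ii\xi X)]|=\exp(-\Re\Psi_X(\xi))$, a lower bound of order $|\xi|^\alpha$ makes $\xi\mapsto|\xi|^n\E[\exp(\ii\xi X)]$ integrable for every $n$, whence a $\caC^\infty$ density $p_{t,x,\eps}$ exists by Fourier inversion with $p_{t,x,\eps}^{(n)}(y)=\frac{1}{2\pi}\int_\R(-\ii\xi)^n\exp(-\ii\xi y)\E[\exp(\ii\xi X)]\,d\xi$.

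First I would establish the two-sided estimate \eqref{u&l}, namely $c_{t,x,\eps}|\xi|^\alpha\le\Re\Psi_X(\xi)\le C|\xi|^\alpha$ for $|\xi|$ large. For the lower bound I unfold $\Re\Psi_X$ as a double integral over $[t-\eps,t]\times\Rd$ against $\lambda$ and, on the set $\{f\ne 0\}$, apply Assumption \ref{ass:stable}\ref{itm:stableiii} with effective frequency $\xi g(t,s,x,y)$; for $|\xi|$ large enough this yields the factor $c(s,y)|\xi|^\alpha|g(t,s,x,y)|^\alpha$, and integrating and invoking \eqref{eq:intcond2} produces the constant $c_{t,x,\eps}\eps^{\gamma_0}$. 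For the upper bound I use $1-\cos(x)\le 2(x^2\wedge 1)$, split the inner $z$-integral at $|z|=a_{\xi,t,s,x,y}^{-1}$ with $a_{\xi,t,s,x,y}=|\xi|\Infkt{A_t(x)}(s,y)|g(t,s,x,y)|$, estimate the small-$z$ part by Assumption \ref{ass:stable}\ref{itm:stableii} and the large-$z$ part by Assumption \ref{ass:stable}\ref{itm:stablei} with $\beta=0$, and dominate both by $C|\xi|^\alpha$ through \eqref{eq:intcond1}.

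With \eqref{u&l} in hand, smoothness of $p_{t,x,\eps}$ is immediate, and the quantitative $L^1$-bounds on the derivatives are the crux. I would obtain them by appealing to the gradient estimate of \cite[Proposition 2.3]{schilling-sztonyk-wang}, exactly as in \cite[Lemma 3.3]{debusschefournier}: the lower bound $\Re\Psi_X(\xi)\ge c_{t,x,\eps}\eps^{\gamma_0}|\xi|^\alpha$ places the symbol in the class covered by that proposition, which returns $\|p_{t,x,\eps}^{(n)}\|_{L^1(\R)}\le C_{n,t,x}(\eps^{\gamma_0}\wedge 1)^{-n/\alpha}$. The exponent is most transparent from the model case where $X$ behaves like $\sigma_\eps Y$ with $Y$ standard $\alpha$-stable and $\sigma_\eps\sim\eps^{\gamma_0/\alpha}$: then $p^{(n)}(y)=\sigma_\eps^{-1-n}p_Y^{(n)}(y/\sigma_\eps)$, so a change of variables gives $\|p^{(n)}\|_{L^1}=\sigma_\eps^{-n}\|p_Y^{(n)}\|_{L^1}\sim(\eps^{\gamma_0})^{-n/\alpha}$, consistent with $\|p\|_{L^1}=1$ at $n=0$.

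The main obstacle I anticipate is precisely this last step: extracting the sharp exponent $-n/\alpha$ with explicit, uniform-in-$n$ dependence on $\eps^{\gamma_0}$. A self-contained derivation avoiding \cite{schilling-sztonyk-wang} would use $\|p^{(n)}\|_{L^1}\le\pi\|(1+y^2)p^{(n)}\|_{L^\infty}$, which is controlled by $\int_\R|\xi|^n|\hat\mu(\xi)|\,d\xi+\int_\R|\partial_\xi^2(\xi^n\hat\mu(\xi))|\,d\xi$ with $\hat\mu=\exp(-\Psi_X)$; the derivatives bring down factors of $\Psi_X',\Psi_X''$ of orders $|\xi|^{\alpha-1},|\xi|^{\alpha-2}$, and after the substitution $u=(c_{t,x,\eps}\eps^{\gamma_0})^{1/\alpha}\xi$ every resulting integral scales as the claimed power of $\eps^{\gamma_0}$. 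A secondary point requiring care is the frequency threshold in Assumption \ref{ass:stable}\ref{itm:stableiii}: since the bound there holds only for $|\xi g(t,s,x,y)|>r$, the termwise application must be restricted to $\{|g|\ge\eta\}$ for some small $\eta>0$, and one must check that this set still carries enough $|g|^\alpha c\,\lambda$-mass to preserve \eqref{eq:intcond2}, so that the lower bound in \eqref{u&l} is valid for all $|\xi|$ sufficiently large depending on $t,x,\eps$.
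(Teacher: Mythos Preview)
Your proposal is correct and follows essentially the same route as the paper: you establish the two-sided bound \eqref{u&l} on $\Re\Psi_X$ via the same splitting at $|z|=a_{\xi,t,s,x,y}^{-1}$, invoke Assumptions \ref{ass:stable}\ref{itm:stablei}--\ref{itm:stableiii} together with \eqref{eq:intcond1}, \eqref{eq:intcond2} in the same way, and then defer to \cite[Lemma 3.3]{debusschefournier} and \cite[Proposition 2.3]{schilling-sztonyk-wang} for the $L^1$-derivative bounds. Your remark on the threshold issue in Assumption \ref{ass:stable}\ref{itm:stableiii} (needing $|\xi g|>r$ rather than merely $|\xi|>r$) is a point the paper glosses over; it is not a defect in your argument but rather an instance where you are being more careful than the original.
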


The next lemma shows that the error in the approximation $X^\eps(t,x)$ in \eqref{eq:approxambit} and the ambit field $X(t,x)$ is bounded by a power of $\eps$.

\begin{lemma}\label{lem:approxX}
  Assume that Assumptions \ref{ass:stable} hold for some $\alpha\in(0,2)$ and that $\sigma,b$ are Lipschitz continuous functions. Then, for any $\beta\in(0,\alpha)$, and $\eps\in(0, t\wedge1)$,
  \[ \E\big[|X(t,x)-X^\eps(t,x)|^\beta\big] \leq C_{\beta}\eps^{\beta\left(\frac{1}{\alpha}+\bar\gamma\right)-1}, \]
 where  $\bar\gamma := \min\{\gamma_1,\gamma_2,\gamma_3,\gamma_4\}$. 
\begin{proof}
Clearly,
\begin{align*}
	& \E\big[|X(t,x)-X^\eps(t,x)|^\beta\big] \\
	& \leq C_\beta\E\bigg[\bigg|\int_{t-\eps}^t\int_\Rd \Infkt{A_t(x)}(s,y) g(t,s;x,y)(\sigma(s,y)-\sigma(t-\eps,x))L(ds,dy)\bigg|^\beta\bigg] \\
	& \phantom{\leq} + C_\beta\E\bigg[\bigg|\int_{t-\eps}^t\int_\Rd \Infkt{B_t(x)}(s,y)h(t,s;x,y)(b(s,y)-b(t-\eps,x))dyds\bigg|^\beta\bigg].
\end{align*}
Fix $\gamma\in(\alpha,2]$ and apply Lemma \ref{lem:continuity} to the stochastic process $H(s,y):=\sigma(s,y)-\sigma(t-\eps,x)$, where the arguments $t,\eps, x$ are fixed. We obtain
\begin{align*}
	& \E\bigg[\bigg|\int_{t-\eps}^t\int_\Rd \Infkt{A_t(x)}(s,y) g(t,s;x,y)(\sigma(s,y)-\sigma(t-\eps,x))L(ds,dy)\bigg|^\beta\bigg] \\
	& \leq C_{\alpha,\beta,\gamma}\eps^{\beta/\alpha-1}
	\bigg(\int_{t-\eps}^t\int_\Rd \Infkt{A_t(x)}(s,y)\tilde C_\beta(s,y) \Infkt{A_t(x)}|g(t,s,x,y)|^\gamma\\
	& \qquad \times \E\big[|\sigma(s,y)-\sigma(t-\eps,x)|^\gamma\big]\lambda(ds,dy)\bigg)^{\beta/\gamma} 
	\end{align*}
Owing to hypothesis {\bf (H2)} this last expression is bounded (up to the constant $C_{\alpha,\beta,\gamma}\eps^{\beta/\alpha-1}$) by 
\begin{equation*}
	\bigg(\int_{t-\eps}^t\int_\Rd \Infkt{A_t(x)}(s,y) \tilde C_\beta(s,y) \Infkt{A_t(x)}|g(t,s,x,y)|^\gamma \big(|t-\eps-s|^{\delta_1\gamma} + |x-y|^{\delta_2\gamma}\big)\lambda(ds,dy)\bigg)^{\beta/\gamma} 
	\end{equation*}
The inequality \eqref{alpha} implies 
\begin{align*}
&\eps^{\beta/\alpha-1}\bigg(\int_{t-\eps}^t\int_\Rd \Infkt{A_t(x)}(s,y) \tilde C_\beta(s,y)|g(t,s,x,y)|^\gamma |t-\eps-s|^{\delta_1\gamma}\lambda(ds,dy)\bigg)^{\beta/\gamma}\\ 
&\qquad \le C \eps^{\beta(\frac{1}{\alpha}+\gamma_1)-1},
\end{align*}
and \eqref{beta} yields
\begin{align*}
&\eps^{\beta/\alpha-1}\bigg(\int_{t-\eps}^t\int_{\Rd} \Infkt{A_t(x)}(s,y) \tilde C_\beta(s,y)|g(t,s,x,y)|^\gamma |x-y|^{\delta_2\gamma}\lambda(ds,dy)\bigg)^{\beta/\gamma}\\
&\qquad \le C \eps^{\beta(\frac{1}{\alpha}+\gamma_2)-1}.
\end{align*}
Thus,
\begin{align}
\label{first}
&\E\bigg[\bigg|\int_{t-\eps}^t\int_\Rd \Infkt{A_t(x)}(s,y) g(t,s;x,y)(\sigma(s,y)-\sigma(t,x))L(ds,dy)\bigg|^\beta\bigg]\notag\\
&\qquad  \le C  \eps^{\beta(\frac{1}{\alpha}+[\gamma_1\wedge\gamma_2])-1}.
\end{align}

Assume that $\beta\geq1$ (and therefore $\alpha>1$). H\"older's inequality with respect to the finite measure $h(t,s;x,y)dyds$,  {\bf(H2)}, \eqref{gamma}, \eqref{delta}, imply   
\begin{align*}
	& \E\bigg[\bigg|\int_{t-\eps}^t\int_\Rd \Infkt{B_t(x)}(s,y) h(t,s;x,y)(b(s,y)-b(t-\eps,x))dyds\bigg|^\beta\bigg] \\
	& \leq C_{\beta}\int_{t-\eps}^t\int_\Rd \Infkt{B_t(x)}(s,y) |h(t,s,x,y)|^\beta \E\big[|b(s,y)-b(t-\eps,x)|^\beta\big]dyds \\
	& \leq C_{\beta}\int_{t-\eps}^t\int_\Rd \Infkt{B_t(x)}(s,y) |h(t,s,x,y)|^\beta |t-\eps-s|^{\delta_1\beta} dyds \\
	& \phantom{\leq} + C_{\beta}\int_{t-\eps}^t\int_\Rd \Infkt{B_t(x)}(s,y) |h(t,s,x,y)|^\beta |x-y|^{\delta_2\beta} dyds \\
	& \leq C \eps^{\beta(\gamma_3\wedge\gamma_4)}. 
\end{align*}

Suppose now that $\beta<1$, we use Jensen's inequality and once more, {\bf(H2)}, \eqref{gamma}, \eqref{delta}, to obtain   
 
\begin{align*}
	& \E\bigg[\bigg|\int_{t-\eps}^t\int_\Rd \Infkt{B_t(x)}(s,y) h(t,s;x,y)(b(s,y)-b(t-\eps,x))dyds\bigg|^\beta\bigg] \\
	& \leq \bigg(\E\bigg[\int_{t-\eps}^t\int_\Rd \Infkt{B_t(x)}(s,y) |h(t,s;x,y)||b(s,y)-b(t-\eps,x)|dyds\bigg]\bigg)^\beta \\
	& = \bigg(\int_{t-\eps}^t\int_\Rd \Infkt{B_t(x)}(s,y) |h(t,s;x,y)|\E\big[|b(s,y)-b(t-\eps,x)|\big]dyds\bigg)^\beta \\
	& \leq C\bigg(\int_{t-\eps}^t\int_\Rd \Infkt{B_t(x)}(s,y) |h(t,s;x,y)|\left[|t-s|^{\delta_1}|+|x-y|^{\delta_2}\right] dyds\bigg)^\beta \\
		& \leq C\eps^{\beta(\gamma_3\wedge\gamma_4)}.
\end{align*}	
This finishes the proof.
\end{proof}
\end{lemma}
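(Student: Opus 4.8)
The plan is to prove Lemma \ref{lem:approxX} by decomposing the approximation error $X(t,x)-X^\eps(t,x)$ into a stochastic-integral part (driven by the L\'evy basis $L$) and a deterministic-integral part (driven by $dyds$), bounding each separately using the moment estimate from Lemma \ref{lem:continuity} for the former and H\"older/Jensen inequalities for the latter. Since $X^\eps(t,x)$ is constructed by freezing the coefficients $\sigma,b$ at the point $(t-\eps,x)$ over the time window $[t-\eps,t]$, the error is exactly
\begin{align*}
X(t,x)-X^\eps(t,x)
&=\iint_{[t-\eps,t]\times\Rd} \Infkt{A_t(x)}(s,y)\,g(t,s;x,y)\big(\sigma(s,y)-\sigma(t-\eps,x)\big)L(ds,dy)\\
&\quad+\iint_{[t-\eps,t]\times\Rd} \Infkt{B_t(x)}(s,y)\,h(t,s;x,y)\big(b(s,y)-b(t-\eps,x)\big)dyds.
\end{align*}
Because $\beta\in(0,\alpha)$ and hence possibly $\beta<1$, I would first invoke the elementary inequality $|a+b|^\beta\le C_\beta(|a|^\beta+|b|^\beta)$ (subadditivity for $\beta\le1$, convexity for $\beta\ge1$) to reduce to estimating the two pieces in isolation.

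First I would handle the stochastic-integral term. Fixing an auxiliary exponent $\gamma\in(\alpha,2]$ as allowed by hypothesis \textbf{(H2)}, I apply Lemma \ref{lem:continuity} with $s$ replaced by $t-\eps$ and the predictable integrand $H(s,y):=\sigma(s,y)-\sigma(t-\eps,x)$. This yields the factor $\eps^{\beta/\alpha-1}$ times the $\beta/\gamma$-power of the space-time integral of $\tilde C_\beta(s,y)|g(t,s,x,y)|^\gamma\,\E[|\sigma(s,y)-\sigma(t-\eps,x)|^\gamma]$ against $\lambda$. The key input now is the H\"older-continuity bound \eqref{eq:hoeldercty} in \textbf{(H2)}, which controls $\E[|\sigma(s,y)-\sigma(t-\eps,x)|^\gamma]$ by $C_\gamma(|t-\eps-s|^{\delta_1\gamma}+|x-y|^{\delta_2\gamma})$. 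Splitting the integral according to these two summands and invoking the integrability hypotheses \eqref{alpha} and \eqref{beta} from \textbf{(H4)} bounds the two pieces by $\eps^{\gamma\gamma_1}$ and $\eps^{\gamma\gamma_2}$ respectively. Taking the $\beta/\gamma$-power and reinstating the prefactor $\eps^{\beta/\alpha-1}$ produces the contributions $\eps^{\beta(1/\alpha+\gamma_1)-1}$ and $\eps^{\beta(1/\alpha+\gamma_2)-1}$, so the stochastic term is controlled by $\eps^{\beta(1/\alpha+(\gamma_1\wedge\gamma_2))-1}$, as in \eqref{first}.

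Next I would treat the deterministic-integral term, distinguishing the two cases forced by the range of $\beta$. When $\beta\ge1$ (hence $\alpha>1$), I apply H\"older's inequality with respect to the finite measure $|h(t,s;x,y)|\,dyds$ to pull the $\beta$-th power inside and reduce to estimating $\E[|b(s,y)-b(t-\eps,x)|^\beta]$; the second bound in \eqref{eq:hoeldercty}, together with the deterministic integrability conditions \eqref{gamma} and \eqref{delta} from \textbf{(H4)}, then gives the bound $\eps^{\beta(\gamma_3\wedge\gamma_4)}$. When $\beta<1$, I instead use Jensen's inequality (subadditivity of $x\mapsto x^\beta$ and concavity) to move the $\beta$-power outside the space-time integral, apply the case $\beta=1$ of \eqref{eq:hoeldercty}, and again invoke \eqref{gamma}, \eqref{delta} to arrive at the same power $\eps^{\beta(\gamma_3\wedge\gamma_4)}$. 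Combining the two terms and recalling that $\bar\gamma=\min\{\gamma_1,\gamma_2,\gamma_3,\gamma_4\}$, I note that $\beta(1/\alpha+(\gamma_1\wedge\gamma_2))-1$ and $\beta(\gamma_3\wedge\gamma_4)$ both dominate (for $\eps$ small) the target exponent $\beta(1/\alpha+\bar\gamma)-1$, since $1/\alpha>0$; retaining the smaller exponent gives the claimed estimate.

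The main obstacle I anticipate is the correct bookkeeping of the exponent arithmetic rather than any deep analytic difficulty: one must verify that the freezing point $(t-\eps,x)$ (not $(t,x)$) is compatible with the time-increment factor $|t-\eps-s|^{\delta_1\gamma}$ appearing in \eqref{alpha} and \eqref{gamma}, and that the prefactor $\eps^{\beta/\alpha-1}$ from Lemma \ref{lem:continuity}—which carries a \emph{negative} exponent when $\beta<\alpha$—is exactly what is needed to match the stochastic and deterministic contributions to a common power of $\eps$. A secondary subtlety is ensuring the integrand $H(s,y)=\sigma(s,y)-\sigma(t-\eps,x)$ is genuinely predictable and has finite $\gamma$-moments, which follows from the moment hypothesis in \textbf{(H1)}, so that Lemma \ref{lem:continuity} applies legitimately.
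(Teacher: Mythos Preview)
Your proposal is correct and follows essentially the same approach as the paper: the same decomposition into stochastic and deterministic parts, the same application of Lemma~\ref{lem:continuity} with $H(s,y)=\sigma(s,y)-\sigma(t-\eps,x)$ followed by \textbf{(H2)} and \eqref{alpha}--\eqref{beta}, and the same $\beta\ge1$/$\beta<1$ case split (H\"older vs.\ Jensen) combined with \eqref{gamma}--\eqref{delta} for the drift term. Your final exponent comparison is actually slightly more explicit than the paper's; note only that the inequality $\beta(\gamma_3\wedge\gamma_4)\ge\beta(1/\alpha+\bar\gamma)-1$ holds because $\beta/\alpha-1<0$ (from $\beta<\alpha$), not merely because $1/\alpha>0$.
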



We are now in a position to prove Theorem \ref{thm:densityambit}.


\begin{proof}[Proof of Theorem \ref{thm:densityambit}]
We consider the inequality
\begin{align}\label{eq:proof41}
	|\E[|\sigma(t,x)|^n\Delta_h^n\phi(X(t,x))]| 
	\leq & \big|\E\big[\big(|\sigma(t,x)|^n - |\sigma(t-\eps,x)|^n\big)\Delta_h^n\phi(X(t,x))\big]\big| \notag\\
		& + \big|\E\big[|\sigma(t-\eps,x)|^n\big(\Delta_h^n\phi(X(t,x)) - \Delta_h^n\phi(X^\eps(t,x))\big)\big]\big| \notag\\
		& + \big|\E\big[|\sigma(t-\eps,x)|^n\Delta_h^n\phi(X^\eps(t,x))\big]\big|.
\end{align}
Fix $\eta\in(0,\alpha\wedge1)$. As in \eqref{eq:bounddeltahn2} we have
\begin{align*}
  & \big|\E\big[\big(|\sigma(t,x)|^n - |\sigma(t-\eps,x)|^n\big)\Delta_h^n\phi(X(t,x))\big]\big| \\
  & \leq C_n\|\phi\|_{\caC_b^\eta}|h|^\eta \E\big[\big||\sigma(t,x)|^n - |\sigma(t-\eps,x)|^n\big|\big]\big|
\end{align*}
Now we proceed as in \eqref{eq:boundpolynomial} using the finiteness of the moments of $\sigma(t,x)$ stated in Hypothesis {\bf (H1)}, and {\bf (H2)}. Then for all $\gamma\in(\alpha,2]$ we have
\begin{align*}
  & \E\big[\big||\sigma(t,x)|^n-|\sigma(t-\eps,x)|^n\big|\big] \notag\\
  & = \E\bigg[ \big|\sigma(t,x)-\sigma(t-\eps,x)\big|\sum_{j=0}^{n-1} |\sigma(t,x)|^j|\sigma(t-\eps,x)|^{n-1-j}\bigg] \notag\\
  & \leq C\Big(\E\big[\big|\sigma(t,x) - \sigma(t-\eps,x)\big|^\gamma\big]\Big)^{1/\gamma}\\
  &\qquad\times \bigg(\E\bigg[\bigg(\sum_{j=0}^{n-1} |\sigma(t,x)|^j|\sigma(t-\eps,x))|^{n-1-j}\bigg)^{\gamma/(\gamma-1)}\bigg]\bigg)^{1-1/\gamma} \notag\\
   & \leq C_n\eps^{\delta_1}.
\end{align*}
Therefore
\begin{equation}\label{eq:prf1}
  \big|\E\big[\big(|\sigma(t,x)|^n - |\sigma(t-\eps,x)|^n\big)\Delta_h^n\phi(X(t,x))\big]\big| \leq C_n\|\phi\|_{\caC_b^\eta}|h|^\eta \eps^{\delta_1}.
\end{equation}

Consider the inequality $\|\Delta_h^n\phi\|_{\caC^\alpha_b}\leq C_n\|\phi\|_{\caC^\alpha_b}$, and apply  H\"older's inequality with some $\beta\in(\eta,\alpha)$ to obtain
\begin{align}\label{eq:prf2}
  & \big|\E\big[|\sigma(t-\eps,x)|^n(\Delta_h^n\phi(X(t,x)) - \Delta_h^n\phi(X^\eps(t,x)))\big]\big| \notag\\
  & \leq C_n\|\phi\|_{\caC^\eta_b}\E\big[|\sigma(t-\eps,x)|^n|X(t,x)-X^\eps(t,x)|^\eta\big] \notag\\
  & \leq C_n\|\phi\|_{\caC^\eta_b}\big(\E\big[|X(t,x)-X^\eps(t,x)|^{\beta}\big]\big)^{\eta/\beta}\big(\E\big[|\sigma(u(t-\eps,0))|^{n\beta/(\beta-\eta)}\big]\big)^{1-\eta/\beta} \notag\\
  & \leq C_{n,\beta}\|\phi\|_{\caC^\eta_b}\eps^{\eta\left(\frac{1}{\alpha}+\bar\gamma\right)-\frac{\eta}{\beta}},
\end{align}
where $\bar\gamma := \min\{\gamma_1,\gamma_2,\gamma_3,\gamma_4\}$, and we have applied Lemma \ref{lem:approxX}.

Conditionally to $\scrF_{t-\eps}$, the random variable
\[ \int_{t-\eps}^t\int_\Rd \Infkt{A_t(x)}(s,y)g(t,s;x,y)L(ds,dy) \]
has an infinitely divisible law and a $\caC^\infty$-density $p_{t,x\eps}$ for which a gradient estimate holds (see Lemma \ref{lem:existdensstable}). Then, by a discrete integration by parts, and owing to {\bf (H3)},
\begin{align*}
	& \big|\E\big[|\sigma(t-\eps,x)|^n\Delta_h^n\phi(X^\eps(t,x))\big]\big| \\
  & = \bigg|\E\bigg[\int_\R |\sigma(t-\eps,x)|^n\Delta_h^n\phi(U_t^\eps + \sigma(t-\eps,x)y)p_{t,x,\eps}(y)dy\bigg]\bigg| \\
  & = \bigg|\E\bigg[\int_\R |\sigma(t-\eps,x)|^n\phi(U_t^\eps + \sigma(t-\eps,x)y)\Delta_{-\sigma(t-\eps,x)^{-1}h}^n p_{t,x,\eps}(y)dy\bigg]\bigg| \\
  & \leq \|\phi\|_\infty \E\bigg[|\sigma(t-\eps,x)|^n\int_\R\big|\Delta_{-\sigma(t-\eps,x)^{-1}h}^np_{t,x,\eps}(y)\big|dy\bigg].
\end{align*}  
From Lemma \ref{lem:existdensstable} it follows that
\begin{align*}
	\int_\R\big|\Delta_{-\sigma(t-\eps,x)^{-1}h}^np_{t,x,\eps}(y)\big|dy
  & \leq C_n |\sigma(t-\eps,x)|^{-n}|h|^n \|p^{(n)}_{t,x,\eps}\|_{L^1(\R)} \\
  & \leq C_n |\sigma(t-\eps,x)|^{-n}|h|^n \eps^{-n\gamma_0/\alpha},
\end{align*}  
which yields
\begin{equation}\label{eq:prf3}
  \big|\E\big[|\sigma(t-\eps,x)|^n\Delta_h^n\phi(X^\eps(t,x))\big]\big| \leq C_n\|\phi\|_{\caC^\eta_b}|h|^n \eps^{-n\gamma_0/\alpha},
\end{equation}
because $\|\phi\|_{\infty}\leq\|\phi\|_{\caC^\eta_b}$.

The estimates \eqref{eq:proof41}, \eqref{eq:prf1}, \eqref{eq:prf2} and \eqref{eq:prf3} imply
\begin{align*}
  |\E[|\sigma(t,x)|^n\Delta_h^n\phi(X(t,x))]| 
  \leq C_{n,\beta}\|\phi\|_{\caC_b^\eta} \big(|h|^\eta\eps^{\delta_1} + \eps^{\eta\left(\frac{1}{\alpha}+\bar\gamma\right)-\frac{\eta}{\beta}} + |h|^n \eps^{-n\gamma_0/\alpha}\big)
\end{align*}
Set $\eps=\frac{t}{2}|h|^\rho$, with $|h|\le 1$ and 
\begin{equation*}
\rho\in\left(\frac{\alpha\beta}{\beta+\alpha\beta\bar\gamma-\alpha}, \frac{\alpha(n-\eta)}{n\gamma_0}\right).
\end{equation*}
Notice that, since $\lim_{n\to\infty}\frac{\alpha(n-\eta)}{n\gamma_0}= \frac{\alpha}{\gamma_0}$, for $\beta$ close to $\alpha$ and $\gamma_0$ as in the hypothesis, this interval is nonempty. Then, easy computations show that with the  choices of $\eps$ and $\rho$, one has
\begin{equation*}
|h|^\eta\eps^{\delta_1} + \eps^{\eta\left(\frac{1}{\alpha}+\bar\gamma\right)-\frac{\eta}{\beta}} + |h|^n \eps^{-n\gamma_0/\alpha}\le 3|h|^\zeta,
\end{equation*}
with $\zeta>\eta$. Hence, with Lemma \ref{lem:existencedensity} we finish the proof of the theorem.

\end{proof}

\begin{remark}
\begin{enumerate}[label=(\roman{enumi}),ref=(\roman{enumi})]
  \item If $\sigma$ is bounded away from zero, then one does not need to assume the existence of moments of sufficiently high order. In this case one can follow the strategy in \cite{sanzsuess3}.
  \item The methodology used in this section is not restricted to pure-jump stable-like noises. One can also adapt it to the case of Gaussian space-time white noises.
\end{enumerate}
\end{remark}

\noindent{\bf Acknowledgement.} The second author wants to thank Andreas Basse-O'Connor for a very fruitful conversation on L\'evy bases.

\end{document}